\newtheorem{theorem}{Theorem}[section]
\newtheorem{corollary}[theorem]{Corollary}
\newtheorem{lemma}[theorem]{Lemma}
\newtheorem{proposition}[theorem]{Proposition}
\theoremstyle{definition}
\newtheorem{definition}[theorem]{Definition}
\newtheorem{remark}[theorem]{Remark}
\newtheorem{example}[theorem]{Example}
\numberwithin{equation}{section}
\begin{document}
	\baselineskip=17pt
	\title{ Continuous orbit equivalence of semigroup actions  }

	\author[X. Q. Qiang]{Xiangqi Qiang}
	\address{School of Mathematical Science \\ Yangzhou University\\
		Yangzhou 225002, China}
	\email{905163754@qq.com}
	
	\author[C. J. Hou]{Chengjun Hou}
	\address{School of Mathematical Science \\ Yangzhou University\\
		Yangzhou 225002, China}
	\email{cjhou@yzu.edu.cn}

	\begin{abstract}
		In this paper, we consider semigroup actions of discrete countable semigroups on compact spaces by surjective local homeomorphisms. We introduce notions of continuous one-sided orbit equivalence and continuous orbit equivalence  for semigroup actions, and characterize them in terms of the   corresponding  semi-groupoids and transformation groupoids respectively.  Finally, we consider   the case of semigroup  actions by homeomorphisms and    relate   continuous orbit  equivalence of semigroup actions to that of group actions.

	\end{abstract}
	
	\subjclass[2010]{Primary 46L05; Secondary 37B05,46L35}
	
	\keywords{ semigroup action, continuous one-sided orbit equivalence, continuous orbit equivalence, semi-groupoids, transformation groupoids}
	
	\maketitle

	\section{Introduction }
	
Inspired by ergodic theory, Giordano, Putnam and Skau introduced in \cite{GPS} the topological version of orbit equivalences. They obtained a breakthrough result that two Cantor minimal homeomorphisms are strongly orbit equivalent if and only if the crossed product $C^*$-algebras associated with two systems are isomorphic. In \cite{BT}, Boyle and Tomiyama  characterized continuous orbit equivalence between topologically free homeomorphisms. Lin and  Matui gave a few complete descriptions for relations  between the proposed approximate versions of conjugacy and the corresponding crossed product $C^*$-algebras for Cantor minimal systems via $K$-theory.  Especially, they showed that the approximate $K$-conjugacy is the same as strong orbit equivalence for Cantor minimal systems (\cite{LM}) and these systems are (topologically) orbit equivalent if and only if the associated crossed products are tracially equivalent (\cite{Lin}).   In \cite{Ma1}, Matsumoto introduced the notion of  continuous orbit  equivalence for one-sided topological Markov shifts, which are local homeomorphisms, and showed that two irreducible one-sided topological Markov shifts are continuously orbit  equivalent if and only if there exists a diagonal preserving $C^*$-isomorphism between the associated Cuntz-Krieger algebras.  Using the groupoid technique, Matsumoto  and Matui showed in \cite{MM} that this is equivalent to the existence of an isomorphism of two canonical groupoids associated to one-sided shifts. These results were in \cite{CEOR} generalized from the reducible to the general case.

Recently, the concept of continuous orbit equivalence has been generalized to many different cases. In \cite{Li1,Li2},  Li introduced the notions of continuous orbit equivalence for continuous group actions and partial group actions, and  characterized them in terms of  isomorphisms of (partial) $C^*$-crossed products preserving  Cartan subalgebras. Later,  Cordeiro  and  Beuter  extended in \cite{CB} Li's results to partial actions of inverse semigroups and characterized orbit equivalence of topologically principal systems. In \cite{HQ,QH}, motivated by  Mastumoto's notion of asymptotic continuous orbit equivalence in Smale spaces (\cite{Ma2}), we  characterized continuous orbit equivalence of expansive systems up to local conjugacy relations and classified automorphism systems of \'{e}tale equivalence relations up to continuous orbit equivalence. For more interesting progress  and applications on continuous orbit equivalence, see \cite{BCW,CRST,Ma3} and the references therein.
	
 For a semigroup action $(X,P,\theta)$ of a countable semigroup $P$ on a compact space $X$ by surjective local homeomorphisms, Exel and Renault extended this action to an \emph{interaction group} and defined a \emph{transformation groupoid} whose $C^*$-algebra turns to be isomorphic to the crossed product for the interaction group under some standing hypotheses (\cite{ER}). The aim of this paper is to develop the relationship among operator algebras, transformation groupoids and semigroup actions.

 Given a semigroup action $(X,P,\theta)$,  the sets $[x]_{\theta,s}	=\{ \theta_{m}(x): m\in P\}$  and
	$[x]_{\theta}=\{y\in X: \theta_{m}(x)=\theta_{n}(y) \hbox{ for } m,n\in P\}$ are the one-sided orbit  and	the full orbit of $x$, respectively. As in the group actions, two semigroup actions are said to be \emph{one-sided orbit equivalent (resp. orbit equivalent)} if there is a homeomorphism preserving corresponding orbits between underling compact  spaces. Similarly, we can consider continuous versions of these two orbit equivalence. We say that semigroup actions $(X,P,\theta)$ and $(Y,S,\rho)$ are \emph{continuously one-sided orbit equivalent} if  there exist a homeomorphism $\varphi$ from $X$ onto $Y$ and continuous maps
	$a: P\times X \rightarrow S$ and $b: S\times Y \rightarrow P $ such that $\varphi(\theta_{m}(x))=\rho_{a(m,x)} (\varphi(x))$ and $ \varphi^{-1}(\rho_{s}(y))=\theta_{b(s,y)}(\varphi^{-1}(y))$ for all
$m \in P$, $x\in X$, $s\in S$ and $y\in Y$. They are called to be \emph{continuously orbit equivalent} if  there exist a homeomorphism $\varphi:\, X\rightarrow Y$,  continuous mappings
	$a_{1} ,b_{1} :\;  \cup_{(m,n)\in P\times P}(\{(m,n)\}\times X_{(m,n)})\rightarrow S$    and  $a_{2} ,b_{2}: \cup_{(s,t)\in S\times S}(\{(s,t)\}\times Y_{(s,t)}) \rightarrow P $ such that
$\rho_{a_{1}(m,n,x,y)}(\varphi(x))=\rho_{b_{1} (m,n,x,y)}(\varphi(y))$ and $\theta_{a_{2} (s,t,u,v)}(\varphi^{-1}(u))= \theta_{b_{2}(s,t,u,v)}(\varphi^{-1}( v))$ for $ (x,y)\in X_{(m,n)}$, $(u,v)\in Y_{(s,t)}$, where  $X_{(m,n)}=\{(x,y)\in X\times X:\, \theta_{m}(x)=\theta_{n}(y) \}$ and $Y_{(s,t)}=\{(u,v)\in Y\times Y:\, \rho_{s}(u)=\rho_{t}(v)\}$. Denote by $P\ltimes X$ and $\mathcal{G}(X,P,\theta)$  the semi-groupoid and transformation groupoid associated to $(X,P,\theta)$, respectively.  In particular, if $(X,P,\theta)$ is a semigroup  action by homeomorphisms and $G$ is a countable group   containing $P$ as a (unital) sub-semigroup and $G=P^{-1}P=PP^{-1}$, then we can extend  $(X,P,\theta)$ to be a group action $(X,G,\widetilde{\theta})$.  The followings are main results in this paper.
	
\begin{theorem} Let $(X,P,\theta)$ and $(Y,S,\rho)$ be two essentially free semigroup actions. Then
 \begin{enumerate}
 \item[(i)] $(X,P,\theta)$ and $(Y,S,\rho)$ are continuously one-sided orbit equivalent if and only if semi-groupoids $P\ltimes X$ and $S\ltimes Y$ are (topologically) isomorphic.
\item[(ii)] If $(X,P,\theta)$ and $(Y,S,\rho)$ are continuously orbit equivalent, then two \'{e}tale groupoids  $\mathcal{G}(X,P,\theta)$ and $\mathcal{G}(Y,S,\rho)$ are  (topologically) isomorphic.
 \item[(iii)] If $X$ and $Y$ are totally disconnected, then $(X,P,\theta)$ and $(Y,S,\rho)$ are continuously orbit equivalent if and only if $\mathcal{G}(X,P,\theta)$ and $\mathcal{G}(Y,S,\rho)$ are  (topologically) isomorphic if and only if there is a $*$-isomorphism $\Phi$ from $C_r^*(\mathcal{G}(X,P,\theta))$ onto $C_r^*(\mathcal{G}(Y,S,\rho))$ such that $\Phi(C(X))=C(Y)$.
 \item[(iv)] Assume that $(X,P,\theta)$ and  $(Y,S,\rho)$ are semigroup actions by homeomorphisms. If
		$(X,P,\theta)$ and $(Y,S,\rho)$ are continuously orbit equivalent, then the associated group actions $ (X,G,\tilde{\theta})$ and $(Y,H,\tilde{\rho})$ are continuously orbit equivalent in Li's sense $($\cite{Li1}$)$. Moreover, if $X$ and $Y$ are totally disconnected or $(G,P)$ and $(H,T)$ are two lattice-ordered groups, then the converse of this statement holds.
\end{enumerate}
\end{theorem}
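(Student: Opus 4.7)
The plan is to handle the four parts in order, with each part using the dynamical cocycles to produce a groupoid map and then checking algebraic compatibility by exploiting essential freeness to cancel orbit elements.

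For (i), starting from a continuous one-sided orbit equivalence $(\varphi, a, b)$, I would define $\Phi\colon P\ltimes X\to S\ltimes Y$ by $\Phi(m,x):=(a(m,x),\varphi(x))$. The semi-groupoid composition $(m,x)(n,y)=(mn,y)$ (valid when $\theta_{n}(y)=x$) forces the cocycle identity $a(mn,y)=a(m,\theta_{n}(y))\,a(n,y)$, which I would derive by applying $\rho$ on both sides of $\varphi(\theta_{mn}(y))=\varphi(\theta_{m}(\theta_{n}(y)))$ and canceling via essential freeness of $(Y,S,\rho)$. Continuity and bijectivity come from the analogous construction with $b$. The converse simply reads off $\varphi$ from the restriction to units and extracts $a,b$ by projecting the first coordinate of a semi-groupoid isomorphism.

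For (ii), the set $X_{(m,n)}$ supplies exactly the standard Deaconu--Renault parametrization of morphisms in $\mathcal{G}(X,P,\theta)$, and I would define $\Phi\bigl([x,m,n,y]\bigr):=[\varphi(x),a_{1}(m,n,x,y),b_{1}(m,n,x,y),\varphi(y)]$, verifying that the value is independent of the chosen representative by using essential freeness of $(Y,S,\rho)$ to absorb any ambiguity in the $S$-coordinates. Functoriality of $\Phi$ then follows from the pairing identities satisfied by $a_{1},b_{1}$, and continuity is inherited from the continuity of these maps on the disjoint-union domain. For (iii), total disconnectedness of $X,Y$ lets me promote a groupoid isomorphism to orbit-equivalence data by covering the graph with compact-open bisections on which the $P$- and $S$-indices are locally constant, giving the converse to (ii); the final $C^{\ast}$-algebraic equivalence is then an application of Renault's reconstruction theorem for \'etale groupoids with totally disconnected unit space, whose topological principality hypothesis is exactly essential freeness.

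For (iv), when $P$ acts by homeomorphisms I would extend $a\colon P\times X\to S$ to a group cocycle $\tilde{a}\colon G\times X\to H$ by writing $g=m^{-1}n$ and setting $\tilde{a}(g,x):=a(n,\tilde{\theta}_{m^{-1}}(x))\,a(m,\tilde{\theta}_{m^{-1}}(x))^{-1}$, showing well-definedness via essential freeness and the cocycle identity, and packaging this as a continuous orbit equivalence in Li's sense. For the converse, a Li-style continuous orbit equivalence supplies a continuous map $G\times X\to H$, but this need not take values in $T$; in the totally disconnected case I would partition $X$ into compact-open sets on which the map is locally constant and use $G=P^{-1}P$ to rewrite each value as a pair in $P\times P$, while in the lattice-ordered case I would apply the positive/negative decomposition $g=g_{+}\,g_{-}^{-1}$ with $g_{\pm}\in P$ to split the cocycle into two $T$-valued cocycles.

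The main obstacle I expect is the converse of (iv) in the lattice-ordered setting: turning a single continuous $G$-valued cocycle into two continuous semigroup-valued cocycles requires the lattice operations $g\mapsto g_{\pm}$ to interact continuously with the space variable $x$, which is not automatic and forces one to verify compatibility of the order structure on $G$ and $H$ with the topology of $X$ and $Y$ along entire orbit segments. The totally disconnected case sidesteps this by reducing to locally constant behavior, but the lattice-ordered case genuinely requires the order-theoretic hypotheses to control the decomposition globally.
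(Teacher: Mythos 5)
Parts (i)--(iii) of your proposal are correct and follow essentially the paper's own route: the semi-groupoid map $(m,x)\mapsto (a(m,x),\varphi(x))$ with the cocycle identity forced by essential freeness (the paper's Lemma 2.5 and Theorem 2.8); the groupoid map $(x,mn^{-1},y)\mapsto (\varphi(x),\,a_1(m,n,x,y)b_1(m,n,x,y)^{-1},\,\varphi(y))$ (Remark 3.5 and Theorem 3.9); and, for (iii), clopen covers to extract locally constant orbit data plus the Cartan-reconstruction theorem together with the equivalence of essential freeness and topological principality of $\mathcal{G}(X,P,\theta)$ (Proposition 3.10 and Corollary 3.11). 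One warning about (ii): well-definedness of the $H$-component is not a pointwise application of essential freeness. Essential freeness only rules out coincidences on \emph{open} sets, so the paper's Lemma 3.4 has to pass to open bisections and, crucially, invoke the standing Ore assumption $G=PP^{-1}=P^{-1}P$ to produce $p,q$ with $k_1(y)^{-1}k_2(y)=pq^{-1}$ before freeness can be applied. Your sketch elides this, but the underlying idea is the same.

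The genuine gap is in (iv). Your forward construction presupposes a continuous map $a\colon P\times X\to S$ satisfying $\varphi(\theta_m(x))=\rho_{a(m,x)}(\varphi(x))$. That is the data of continuous \emph{one-sided} orbit equivalence (Definition 2.2), not of continuous orbit equivalence, which is the actual hypothesis of (iv). Under continuous orbit equivalence, $\varphi(\theta_m(x))$ need only lie in the full $\rho$-orbit of $\varphi(x)$; what the hypothesis provides (via Lemma 4.3) are maps $a_1,b_1\colon P\times P\times X\to S$ with $\rho_{a_1(m,n,x)}(\varphi(x))=\rho_{b_1(m,n,x)}(\varphi(\theta_n^{-1}(\theta_m(x))))$, and the existence of an $S$-valued map with your intertwining property is precisely the strictly stronger notion of Definition 2.2, which nothing in the hypothesis supplies. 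The repair is to set $\tilde a(mn^{-1},x):=a_1(m,n,x)\,b_1(m,n,x)^{-1}\in H$, so that $(4.1)$ gives $\varphi(\tilde\theta_{mn^{-1}}(x))=\tilde\rho_{\tilde a(mn^{-1},x)}(\varphi(x))$; well-definedness over the representation $g=mn^{-1}$ is then exactly Lemma 3.4 again (essential freeness plus the Ore condition), after which Li's notion follows directly. Note that this corrected direct route is genuinely different from the paper, which instead obtains (iv) by composing Theorem 3.9, Proposition 4.2 and Li's theorem. Finally, the ``main obstacle'' you flag in the lattice-ordered converse is illusory: $H$ is a countable \emph{discrete} group, so for fixed $g$ the Li cocycle $x\mapsto a(g,x)$ is locally constant, and post-composing with the fixed decomposition $h\mapsto (h\vee e,\;h^{-1}(h\vee e))$ preserves local constancy; this is exactly how the paper's Proposition 4.4 proceeds, where continuity is automatic from discreteness rather than being an extra compatibility to verify.
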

Here the notion of essential  freeness for $(X,P,\theta)$   is derived from the study of the groupoid associated to the one-sided shift. It's worth noting that condition (I) in   one-sided topological Markov shift guarantees that this system is essentially free. Thus the above result generalizes Matsumoto and Carlsen et. al.'s results for one-sided subshifts of finite type (\cite{MM,CEOR}).

 We now give some notions needed in this paper.  For a topological  groupoid $\mathcal{G}$, let $\mathcal{G}^{(0)}$ and $\mathcal{G}^{(2)}$ be  the unit space and the set of composable pairs, respectively. The range map $r$ and the domain map $d$ from $\mathcal{G}$ onto $\mathcal{G}^{(0)}$ are defined by $r(g)=gg^{-1}$ and $d(g)=g^{-1}g$, respectively. A subset $U$ of  groupoid $\mathcal{G}$ is a bisection if both the restrictions of $r$ and $d$ to $U$ are injective.   If $r$ and $d$ are local homeomorphisms, then $\mathcal{G}$ is called to be \'{e}tale.
We refer to \cite{Re1,Sim} for more details on topological groupoids and their $C^*$-algebras.

This paper is organized as follows. In section 2, we list a number of terminologies used in the paper and characterize continuous one-sided orbit equivalence for semigroup actions by the associated semi-groupoids. In section 3,  we introduce the notion of continuous   orbit equivalence of semigroup actions and characterize it in terms of  the associated transformation groupoids, as well as   their reduced groupoid $C^{*}$-algebras with canonical Cartan subalgebras. In section 4,  we consider   the case of semigroup  actions by homeomorphisms and  discuss the relationship between   continuous orbit  equivalence of semigroup actions and that of group actions.

	\section{ Semigroup actions and one-sided orbit equivalence}
	
	Let  $X$  be  a second-countable compact Hausdorff space, $G$  a countable discrete group and $P$ a subsemigroup of $G$. We assume that $X$ has no isolated points and $P$ contains the identity element $e$ of $G$ such that $G=P^{-1}P=PP^{-1}$. Denote by $End(X)$ the semigroup of all  surjective local  homeomorphisms on $X$ under the composition operation.  By  a right action $\theta$ of $P$ on $X$ we mean that  it is  a mapping   $\theta:\;n\in P\rightarrow \theta_{n}\in End(X)$  satisfying that $\theta_{n}\theta_{m}=\theta_{mn}$ for every $n,m \in P$  and $\theta_e=id_{X}$, the identity map on $X$. We denote by a triple $(X,P,\theta)$ a semigroup action in order to emphasize the base space $X$ and the semigroup $P$. In particular, when $P=G$ and each $\theta_n$ is a homeomorphism on $X$, we have a (right) group action $(X,G,\theta)$.
	
	There are two canonical algebraic structures attached to an action $(X,P,\theta)$. One is the topological semi-groupoid, $P\ltimes X: =\{(m,x):\, m\in P, x\in X\}$, whose topology is the  product topology and  multiplication is as follows (\cite{Ex}):
$$ (m,x)(n,y)=(nm, y) \mbox{ if $x=\theta_n(y)$}.$$ The other is the transformation groupoid
	$$\mathcal{G}(X,P,\theta)\,:=\left\lbrace (x,g,y)\in X\times G\times X :\exists m,n \in P , g=mn^{-1},\theta_{m}(x)=\theta_{n}(y)\right\rbrace,$$
	which is a second-countable locally compact Hausdorff \'{e}tale groupoid  under the following multiplication and inverse,
	$$(x,g,y)(u,h,v)=(x,gh,v) \,\, \hbox{if}\,\, y=u,$$
	$$(x,g,y)^{-1}=(y,g^{-1},x),$$
	and the topology with basic open sets $$\Sigma(U,m,n,V)\,:=\left\lbrace (x,mn^{-1},y)\in \mathcal{G}(X,P,\theta):\;  \theta_{m}(x)=\theta_{n}(y) , x\in U,y\in V  \right\rbrace,$$ indexed by quadruples $(U,m,n,V)$, where $m,n\in P$, $U$ and $V  $ are open subsets of $X$,  $\theta_{m}|_{U} ,\theta_{n}|_{ V} $ are homeomorphisms, and $\theta_{m} (U) =\theta_{n} ( V)  $ (\cite{ER}).	
	
	If we identify the unit space $\mathcal{G}(X,P,\theta)^{(0)}=\left\lbrace (x,e,x):\;x\in X\right\rbrace $    with $X$ by identifying $(x,e,x)$ with $x$, then $r(x,g,y)=x$ and $d(x,g,y)=y$. One can check that the  mapping $c_{ \theta}:\mathcal{G}(X,P,\theta) \rightarrow G$ defined by $c_{ \theta}(x,g,y)=g$ is a continuous cocycle.	
	
	Given a semigroup action $(X,P,\theta)$, for $x\in X$, we call  sets 	$$[x]_{\theta,s}\, :=\{ \theta_{m}(x): m\in P\}$$ and
	$$[x]_{\theta}\, :=\{y\in X: \exists m,n\in P\; \text{such that } \theta_{m}(x)=\theta_{n}(y)  \}$$
	the one-sided orbit   and	the full orbit of $x$ under $\theta$, respectively.


	\begin{definition} Let $(X,P,\theta)$ and $(Y,S,\rho)$ be two semigroup  actions.
		\begin{enumerate}
			\item[(i)] We say they are \emph{conjugate} if there exist  a homeomorphism $\varphi: X\rightarrow Y$ and a semigroup isomorphism $\alpha: P\rightarrow S$ such that  $\varphi\theta_m=\rho_{\alpha(m)}\varphi$ for each $m\in P$.
			
			\item[(ii)]  We say they  are \emph{ one-sided orbit equivalent} if there exists a homeomorphism $\varphi:\, X\rightarrow Y $ such that $\varphi([x ]_{\theta,s}) =[\varphi(x) ]_{\rho,s}$ for $x\in X$.
			
			\item[(iii)]	  	We say they are \emph{  orbit equivalent} if there exists a homeomorphism $\varphi:\, X\rightarrow Y $ such that $\varphi([x ]_{\theta}) =[\varphi(x) ]_{\rho}$ for $x\in X$.
			
		\end{enumerate}
	\end{definition}
	
	
	Clearly, conjugacy between two semigroup actions implies one-sided orbit equivalence and orbit equivalence in turn. In addition, if $(X,P,\theta)$ and $(Y,S,\rho)$ are \emph{ one-sided orbit equivalent} via a homeomorphism $\varphi$, then for each $m\in P$ and $x\in X$, there exists $a(m,x)$ (depending on $m$ and $x$) in $S$ such that $\varphi(\theta_m(x))=\rho_{a(m,x)}(\varphi(x)).$ Symmetrically, for each $s\in S$ and $y\in Y$, there exists $b(s,y)$ (depending on $s$ and $y$) in $P$ such that $\varphi^{-1}(\rho_s(y))=\theta_{b(s,y)}(\varphi^{-1}(y))$.
	Thus we have following continuous version of one-sided orbit equivalence  which is analogous to \cite{Li1}.
	
	
	\begin{definition} We say two semigroup actions $(X,P,\theta)$ and $(Y,S,\rho)$
		are \emph{continuously one-sided orbit equivalent}  ( we write $(X,P,\theta)\sim_{ csoe}(Y,S,\rho)$) if  there exist a homeomorphism $\varphi:\, X\rightarrow Y$,  continuous mappings
		$a :\; P\times   X \rightarrow S$    and  $ b  :\; S\times   Y \rightarrow P $ such that  $$\varphi(\theta_{m}(x))=\rho_{a(m,x)} (\varphi(x))\;\; \text{for} \;  m \in P,x\in X \eqno{(2.1)}$$
		$$ \varphi^{-1}(\rho_{s}(y))=\theta_{b(s,y)}(\varphi^{-1}(y))\;\; \text{for} \;  s  \in S, y\in Y. \eqno{(2.2)}$$
		
		
	\end{definition}
	
	In the rest of this section, we will characterize   continuous  one-sided orbit equivalence of semigroup actions    in terms of  the associated  semi-groupoids. The following definition comes from \cite{Re2}.
	
	\begin{definition}
		A	semigroup  action  $(X,P,\theta)$ is said to be \emph{essentially free} if the interior of $\{x\in X:\;\theta_{m}(x)=\theta_{n}(x)\}$ in $X$ is empty for all  distinct pairs $ m,n\in P$.
	\end{definition}
	
	\begin{remark}
		If  $(Y,S,\rho)$ (resp. $(X,P,\theta)$) is essentially free,  then the map $a$ (resp. $b$) is uniquely determined by (2.1) (resp. (2.2)). In fact,   if $a' :\; P\times   X \rightarrow S$  is another continuous map such that  $\varphi(\theta_{m}(x))=\rho_{a'(m,x)} (\varphi(x))$ for $ m \in P, x\in X$, then from the continuity of $a$ and $a'$, for arbitrary $ m \in P,x\in X  $, there exists an open neighbourhood $U$ of $x$ such that $a$ and $a'$ are constant on $\{m\}\times U$ with values $a(m,x)$ and $a'(m,x)$. Thus for every $z\in U$,
		$\rho_{a(m,x)} (\varphi(z))=\varphi(\theta_{m}(z))=\rho_{a'(m,x)} (\varphi(z))$. Essential freeness of $(Y,S,\rho)$ implies $a(m,x)=a'(m,x)$.
	\end{remark}

	\begin{lemma}
		In the situation of Definition 2.2, assume that $(X,P,\theta)$ and $(Y,S,\rho)$ are essentially free. Then  $$a(nm,x)=a(n,x)a(m,\theta_{n}(x)) \mbox{ and } \,\,b(st,y)=b(s,y)b(t,\rho_{s}(y))$$ for  $n,m\in P$, $x\in X$ and $s,t\in S$, $y\in Y$.
	\end{lemma}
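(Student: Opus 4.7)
The plan is to derive the two cocycle-type identities by iterating equations (2.1) and (2.2) and then invoking essential freeness exactly in the spirit of Remark 2.4.

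First I would exploit the fact that $\theta$ is a \emph{right} action: since $\theta_n\theta_m=\theta_{mn}$, we have $\theta_{nm}(x)=\theta_m(\theta_n(x))$, and similarly $\rho_{st}(y)=\rho_t(\rho_s(y))$ for the action $\rho$. Applying (2.1) first to the pair $(n,x)$ and then to the pair $(m,\theta_n(x))$, I compute
\[
\varphi(\theta_{nm}(x)) \;=\; \varphi\bigl(\theta_m(\theta_n(x))\bigr) \;=\; \rho_{a(m,\theta_n(x))}\bigl(\rho_{a(n,x)}(\varphi(x))\bigr) \;=\; \rho_{a(n,x)\,a(m,\theta_n(x))}(\varphi(x)),
\]
while (2.1) applied directly to $(nm,x)$ gives $\varphi(\theta_{nm}(x))=\rho_{a(nm,x)}(\varphi(x))$. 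Comparing yields the identity
\[
\rho_{a(nm,x)}(\varphi(x)) \;=\; \rho_{a(n,x)\,a(m,\theta_n(x))}(\varphi(x)),
\]
at the single point $\varphi(x)$, which is not yet enough to cancel the $\rho$'s.

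Next I would upgrade this to an open-set identity. Because $S$ is discrete and $a\colon P\times X\to S$ is continuous, the slice $a(n,\cdot)$ is locally constant on $X$ for each fixed $n\in P$; the same holds for $a(nm,\cdot)$, and for $a(m,\theta_n(\cdot))$ by continuity of $\theta_n$. Hence I can choose an open neighborhood $U\ni x$ on which all three functions take their values at $x$, so that for every $z\in U$,
\[
\rho_{a(nm,x)}(\varphi(z)) \;=\; \rho_{a(n,x)\,a(m,\theta_n(x))}(\varphi(z)).
\]
Since $\varphi$ is a homeomorphism, $\varphi(U)$ is open in $Y$, and essential freeness of $(Y,S,\rho)$ applied to the two (possibly distinct) elements of $S$ forces them to coincide, giving $a(nm,x)=a(n,x)\,a(m,\theta_n(x))$. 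The identity for $b$ is proved by the symmetric argument with (2.2), using essential freeness of $(X,P,\theta)$.

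The main (minor) obstacle is simply the bookkeeping of the right-action convention: one must be careful that $\theta_n\theta_m=\theta_{mn}$ (rather than $\theta_{nm}$) so that iterating (2.1) produces $a(n,x)\,a(m,\theta_n(x))$ in the correct order, matching the stated formula. After that, the cancellation of $\rho$'s on an open neighborhood via local constancy of $a$ and essential freeness is essentially the argument already rehearsed in Remark 2.4.
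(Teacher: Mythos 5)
Your proof is correct and takes essentially the same route as the paper's: iterate (2.1) using the right-action identity $\theta_{nm}=\theta_m\theta_n$, pass to an open neighbourhood of $x$ on which $a(nm,\cdot)$, $a(n,\cdot)$ and $a(m,\theta_n(\cdot))$ are constant, and invoke essential freeness of $(Y,S,\rho)$ to cancel the $\rho$'s, with the $b$-identity handled symmetrically. The only cosmetic difference is that the paper fixes the neighbourhood first and runs the chain of equalities on it, whereas you derive the pointwise identity first and then upgrade it; this is the same argument.
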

	\begin{proof}
		Let $n,m\in P$, $x\in X$ be arbitrary. Choose an open neighbourhood $U$ of $x$ such that $a(nm,x')=a(nm,x)$, $a(m,\theta_{n}(x'))=a(m,\theta_{n}(x))$ and $a(n,x')=a(n,x)$ for each $x'\in U$. Then for  $x'\in U$,
		$\rho_{a(nm,x)} (\varphi(x'))=\rho_{a(nm,x')} (\varphi(x'))=\varphi(\theta_{nm}(x'))=\varphi(\theta_{ m}(\theta_{ n}(x')))= \rho_{a(m,\theta_{ n}(x'))} (\varphi(\theta_{ n}(x')))=\rho_{a(m,\theta_{ n}(x')) }    \rho_{a(n ,x')} (\varphi(x')) = \rho_{a(n ,x)a(m,\theta_{ n}(x)) } (\varphi(x')) $.
		Essential freeness of $(Y,S,\rho)$ implies that $a(nm,x)=a(n,x)a(m,\theta_{n}(x))$.

Similarly, we can see the equation for the map $b$ holds.
\end{proof}
	
	\begin{lemma}
		In the situation of Definition 2.2, assume that $(X,P,\theta)$ and $(Y,S,\rho)$ are essentially free. Then
		$$b(a(m,x),\varphi(x))=m \mbox{ and }\,\, a(b(s,y),\varphi^{-1}(y))=s$$ for $ m\in P$, $x\in X$ and $s \in S$, $y\in Y$.
	\end{lemma}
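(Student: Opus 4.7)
The plan is to combine the two defining equations $(2.1)$ and $(2.2)$ into a single identity, and then invoke essential freeness to pin down the semigroup exponents. Fix $m\in P$ and $x\in X$. Applying $\varphi^{-1}$ to $(2.1)$ gives $\theta_m(x)=\varphi^{-1}(\rho_{a(m,x)}(\varphi(x)))$, and then substituting $s=a(m,x)$ and $y=\varphi(x)$ in $(2.2)$ yields
$$\theta_m(x)=\theta_{b(a(m,x),\varphi(x))}(x).$$
The goal is now to deduce $m=b(a(m,x),\varphi(x))$ from essential freeness of $(X,P,\theta)$.

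The obstacle is that the index $b(a(m,x),\varphi(x))$ depends on $x$, so I cannot apply the definition of essential freeness directly. To get around this I would use that $P$ and $S$ are discrete: since $a$ and $b$ are continuous into discrete spaces, the maps $a(m,\cdot):X\to S$ and $b(a(m,x),\cdot):Y\to P$ are locally constant. Concretely, choose an open neighborhood $U_1$ of $x$ on which $a(m,\cdot)\equiv a(m,x)$, an open neighborhood $V$ of $\varphi(x)$ on which $b(a(m,x),\cdot)\equiv b(a(m,x),\varphi(x))$, and set $U:=U_1\cap\varphi^{-1}(V)$. Applying the identity of the previous paragraph at an arbitrary $x'\in U$ gives
$$\theta_m(x')=\theta_{b(a(m,x'),\varphi(x'))}(x')=\theta_{b(a(m,x),\varphi(x))}(x').$$
Thus the set $\{x'\in X:\theta_m(x')=\theta_{b(a(m,x),\varphi(x))}(x')\}$ contains the open set $U$, and essential freeness of $(X,P,\theta)$ forces $m=b(a(m,x),\varphi(x))$.

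The second equation $a(b(s,y),\varphi^{-1}(y))=s$ follows by a completely symmetric argument: start from $(2.2)$, apply $\varphi$ and then $(2.1)$ to obtain $\rho_s(y)=\rho_{a(b(s,y),\varphi^{-1}(y))}(y)$, use local constancy of the composite map $y'\mapsto a(b(s,y'),\varphi^{-1}(y'))$ near $y$ to find an open neighborhood on which this identity holds with the second exponent constant, and then invoke essential freeness of $(Y,S,\rho)$. The only real subtlety in the whole argument is the local-constancy step needed to move from a pointwise identity to one valid on an open set before applying essential freeness; the rest is a bookkeeping exercise with $(2.1)$ and $(2.2)$.
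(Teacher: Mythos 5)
Your proof is correct and follows essentially the same route as the paper's: derive the pointwise identity $\theta_m(x)=\theta_{b(a(m,x),\varphi(x))}(x)$ from (2.1) and (2.2), promote it to an open neighbourhood of $x$ via continuity (local constancy) of $a$ and $b$, and conclude by essential freeness of $(X,P,\theta)$. The only difference is that you spell out the local-constancy bookkeeping that the paper compresses into one sentence.
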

	
	\begin{proof}
		We only show the first equation holds. From (2.1) and (2.2), one can see that $\theta_{m}(x)=\theta_{b(a(m,x),\varphi(x))}(x)$ for  $m\in P$ and $x\in X$. By the continuity of $a$ and $b$, this equation holds for    some open neighbourhood $ U $ of $x$.  Essential freeness of $(X,P,\theta)$  implies  that $b(a(m,x),\varphi(x))=m$.
\end{proof}

	\begin{corollary}
		In the situation of Definition 2.2, assume that $(X,P,\theta)$ and $(Y,S,\rho)$ are essentially free.
		For every  $x\in X$,  the map $a_{x}: m\in P\rightarrow a(m,x)\in S$ is a bijection with inverse $b_{\varphi(x)}:\, s\in S\rightarrow b(s,\varphi(x))\in P$, and  $ a_{x}(e)=e $.
		
	\end{corollary}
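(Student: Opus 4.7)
The plan is to read this off directly from Lemmas 2.5 and 2.6, which together already encode everything needed. I expect no serious obstacle; the only point requiring a small extra argument is the identity statement $a_x(e)=e$, where the ``idempotent equation'' arising from Lemma 2.5 must be cancelled in an ambient group.

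First, to establish that $a_x$ and $b_{\varphi(x)}$ are mutually inverse, I would simply substitute. Applying Lemma 2.6 at $x \in X$ gives $b_{\varphi(x)}(a_x(m)) = b(a(m,x),\varphi(x)) = m$ for every $m \in P$, so $b_{\varphi(x)} \circ a_x = \mathrm{id}_P$. Setting $y = \varphi(x)$ in the second identity of Lemma 2.6 gives $a_x(b_{\varphi(x)}(s)) = a(b(s,\varphi(x)),x) = s$ for every $s \in S$, so $a_x \circ b_{\varphi(x)} = \mathrm{id}_S$. Thus $a_x$ is a bijection with inverse $b_{\varphi(x)}$.

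For the final assertion $a_x(e) = e$, I would invoke the cocycle identity of Lemma 2.5 with $n = m = e$. Since $\theta_e = \mathrm{id}_X$, that lemma yields
\[
a(e,x) \;=\; a(ee,x) \;=\; a(e,x)\,a(e,\theta_e(x)) \;=\; a(e,x)\,a(e,x).
\]
Because $S$ sits inside the countable group $H$ playing for $(Y,S,\rho)$ the role of $G$ for $(X,P,\theta)$, cancellation in $H$ forces $a(e,x) = e$, that is, $a_x(e) = e$. (Equivalently, one may apply Lemma 2.6 to the equation above: the already-established bijectivity of $a_x$ together with $a_x(e) = a_x(e)\,a_x(e)$ and injectivity of $a_x$ gives the same conclusion.)

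The only potentially subtle point is making sure the idempotent-to-identity step is justified; this is precisely where the standing hypothesis that $S$ is a unital subsemigroup of a group is used. With that in hand, the corollary follows formally from the two preceding lemmas without any further continuity or freeness argument.
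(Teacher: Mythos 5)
Your proof is correct, and it splits the same way as the paper's on the bijection claim: both of you read $b_{\varphi(x)}\circ a_{x}=\mathrm{id}_{P}$ and $a_{x}\circ b_{\varphi(x)}=\mathrm{id}_{S}$ directly off the two identities of Lemma 2.6. Where you genuinely diverge is the claim $a_{x}(e)=e$. The paper does not invoke Lemma 2.5 there; it reruns the continuity-plus-freeness argument from scratch: from $\varphi(x)=\varphi(\theta_{e}(x))=\rho_{a(e,x)}(\varphi(x))$ and local constancy of $a(e,\cdot)$ one gets $\rho_{a(e,x)}(\varphi(x'))=\varphi(x')$ for all $x'$ in a neighbourhood of $x$, and essential freeness of $(Y,S,\rho)$ forces $a(e,x)=e$. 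Your route instead specializes the cocycle identity of Lemma 2.5 to $n=m=e$, obtaining the idempotent equation $a(e,x)=a(e,x)a(e,x)$, and cancels in the ambient group $H$; this is legitimate, since the standing hypotheses of Section 2, applied to $(Y,S,\rho)$ and made explicit at the start of Section 3, do place $S$ as a unital subsemigroup of a countable group $H$. Your version is more algebraic and reuses the lemma in which the continuity/freeness work was already done; the paper's version is self-contained at this step and would survive even for a semigroup $S$ with no group embedding, needing only essential freeness. One caveat: your parenthetical ``equivalent'' argument via injectivity of $a_{x}$ does not actually work. The only way the cocycle identity rewrites $a_{x}(e)a_{x}(e)$ as a value of $a_{x}$ is as $a_{x}(e)$ itself, so injectivity yields a tautology; a unital semigroup may well contain nontrivial idempotents, and what closes the argument is cancellativity of $S$ (inherited from $H$), not bijectivity of $a_{x}$. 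Delete that aside or replace it by the cancellation argument you already gave.
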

	\begin{proof}
		For $y\in Y$,	define $b_{y}: s\in S\rightarrow b(s,y)\in P$. Then it follows from Lemma 2.6 that $ a_{x}(b_{\varphi(x)}(s))  =s$ and $b_{\varphi(x)}(a_{x}(m)) =m$ for each $m\in P$ and $s\in S$. Thus $a_{x}$ and $b_{\varphi(x)}$ are inverse to each other.
		
  Remark that $\varphi(x)=\varphi(\theta_e(x))=\rho_{a(e,x)}(\varphi(x))$ for $x\in X$.  Choose an open neighbourhood $U$ of $x$ such that $a(e,x')=a(e,x)$ for each $x'\in U$. Then for  $x'\in U$,  $\rho_{a(e,x)}(\varphi(x'))=\rho_{a(e,x')}(\varphi(x'))=\varphi(x')$.  Essential freeness of $(Y,S,\rho)$ implies  $ a(e,x) =e$, thus $ a_{x}(e)=e $.
  \end{proof}

\begin{theorem}
		Two essentially free semigroup actions $(X,P,\theta)$ and $(Y,S,\rho)$ are continuously one-sided orbit equivalent if and only if two semi-groupoids $P\ltimes X$ and $S\ltimes Y$ are (topologically) isomorphic.
	\end{theorem}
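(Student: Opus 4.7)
The plan is to build an explicit bijective correspondence between the data of a continuous one-sided orbit equivalence and the data of a topological semi-groupoid isomorphism $\Phi : P \ltimes X \to S \ltimes Y$, in the obvious form $\Phi(m,x)=(a(m,x),\varphi(x))$.

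For the forward implication, assume $(X,P,\theta)\sim_{csoe}(Y,S,\rho)$ via $\varphi,a,b$ and set $\Phi(m,x) := (a(m,x),\varphi(x))$ with candidate inverse $\Psi(s,y) := (b(s,y),\varphi^{-1}(y))$. Continuity of $\Phi$ and $\Psi$ is immediate from continuity of $\varphi,a,b$; that they are mutually inverse is exactly Lemma~2.6, so $\Phi$ is a homeomorphism. The substantive check is that $\Phi$ preserves partial multiplication. Given $(m,x)(n,y)=(nm,y)$ with $x=\theta_n(y)$, applying (2.1) at $y$ gives $\varphi(x)=\varphi(\theta_n(y))=\rho_{a(n,y)}(\varphi(y))$, so $\Phi(m,x)\cdot\Phi(n,y)$ is defined in $S\ltimes Y$ and equals $\bigl(a(n,y)\,a(m,\theta_n(y)),\varphi(y)\bigr)$. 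This matches $\Phi(nm,y)=(a(nm,y),\varphi(y))$ precisely by the cocycle identity of Lemma~2.5. An analogous verification for $\Psi$ then confirms that $\Phi$ is a topological isomorphism of semi-groupoids.

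For the converse, let $\Phi : P \ltimes X \to S \ltimes Y$ be a topological semi-groupoid isomorphism. The first step is to identify the unit space of $P\ltimes X$. Since $P$ sits inside the group $G$, its only idempotent is $e$, so the units of $P\ltimes X$ are exactly $\{(e,x):x\in X\}$, which the product topology identifies homeomorphically with $X$. As $\Phi$ must carry units bijectively to units, we obtain a homeomorphism $\varphi : X\to Y$ via $\Phi(e,x)=(e,\varphi(x))$. For a general element, the relation $(m,x)\cdot(e,x)=(m,x)$ forces $\Phi(m,x)\cdot(e,\varphi(x))=\Phi(m,x)$, which (by the composability rule in $S\ltimes Y$) compels the second coordinate of $\Phi(m,x)$ to be $\varphi(x)$; hence $\Phi(m,x)=(a(m,x),\varphi(x))$ for a well-defined $a(m,x)\in S$. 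Continuity of $a : P\times X\to S$ is inherited from $\Phi$ (with $P,S$ discrete). Equation (2.1) then falls out by applying $\Phi$ to $(e,\theta_m(x))\cdot(m,x)=(m,x)$ and reading off the resulting composability condition $\varphi(\theta_m(x))=\rho_{a(m,x)}(\varphi(x))$. Running the identical argument with $\Phi^{-1}$ produces a continuous $b:S\times Y\to P$ verifying (2.2).

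The one point to handle carefully is the extraction of the pair $(\varphi,a)$ in the converse: one must argue that the second coordinate of $\Phi(m,x)$ is forced to be $\varphi(x)$ independently of $m$, which is what makes the construction coherent. Essential freeness is used only indirectly in the forward direction through Lemmas~2.5 and~2.6; the converse argument is purely algebraic-topological, requiring no freeness hypothesis. Apart from that subtlety, the proof is a bookkeeping exercise matching source/range conditions on the two sides of the correspondence.
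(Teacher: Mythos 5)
Your proposal is correct and follows essentially the same route as the paper: the forward direction builds $\Lambda(m,x)=(a(m,x),\varphi(x))$ and verifies it is an isomorphism via the cocycle identity (Lemma 2.5) and the mutual-inverse identity (Lemma 2.6), while the converse identifies $\{e\}\times X$ through the idempotent relation $(e,x)(e,x)=(e,x)$, extracts $\varphi$ and $a$ from the two coordinates of $\Lambda$, and recovers (2.1) from the composability forced by $(m,x)(e,x)=(m,x)$ and $(e,\theta_m(x))(m,x)=(m,x)$ --- exactly the paper's argument. Your added observation that essential freeness enters only through Lemmas 2.5 and 2.6 in the forward direction is also consistent with the paper's proof.
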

	\begin{proof}
		Assume that $(X,P,\theta)$ and $(Y,S,\rho)$ are continuously  one-sided orbit equivalent and maps $\varphi$, $a$ and $b$ satisfy Definition 2.2.  Define $\Lambda: P\ltimes X\rightarrow S \ltimes Y$ and $\tilde{\Lambda}: S \ltimes Y\rightarrow P\ltimes X$ by $\Lambda(m,x)=(a(m,x),\varphi(x))$ and $\tilde{\Lambda}(s,y)=(b(s,y),\varphi^{-1}(y ))$. By Lemma 2.5 and Lemma 2.6, one can check that $\Lambda$ is an isomorphism as topological semi-groupoids with inverse isomorphism $\widetilde{\Lambda}$.
		
  Conversely, let  $\Lambda: P\ltimes X\rightarrow S \ltimes Y$ be an isomorphism  as topological semi-groupoids.  For $x\in X$, let $\Lambda(e,x)=(s,y)\in S\ltimes Y$. Since $(e,x)(e,x)=(e,x)$, it follows   that $\Lambda(e,x)\Lambda(e,x)=\Lambda(e,x)$. Consequently, $s=e$. Similarly, for each $y\in Y$,  one has that $\Lambda^{-1}(e,y)=(e,x)$ for some $x\in X$.  Hence, $\Lambda(\{e\}\times X)=\{e\}\times Y$. The spaces $X$ and $Y$ can be embedded into $P\ltimes X$ and $S\ltimes Y$, respectively, by identifying $(e,u)$ with $u$ for $u$ in $X$ or $Y$. Then the restriction $\varphi$ of $\Lambda$ to $X$ is a homeomorphism from $X$ onto $Y$.
		
		Define the map
		$a: (m,x)\in P\ltimes   X\rightarrow c_{\rho}\Lambda(m,x)\in S$, where $c_{\rho}(s,y)=s$ for $(s,y)\in S\ltimes Y$. Then $a$ is continuous. For $(m,x)\in P\ltimes X$, let $\Lambda(m,x)=(a(m,x),y)$ for $y\in Y$. Since $(m,x)(e,x)=(m,x)$, we have $\Lambda(m,x)$ and $\Lambda(e,x)$ are composable, which implies that $y=\varphi(x)$. Thus  $\Lambda(m,x)=(a(m,x),\varphi(x))$. Also since $(e, \theta_m(x))(m,x)=(m,x)$, we have $\Lambda(e,\theta_m(x))$ and $\Lambda(m,x)$ are composable. Thus $$\varphi(\theta_m(x))=\rho_{a(m,x)}(\varphi(x))$$ for $(m,x)\in P\ltimes X$.

Similarly, one can see that the map, $b: (s,y)\in S\ltimes Y\rightarrow c_{\theta}\Lambda^{-1}(s,y)\in P$, is continuous and satisfies that $\varphi^{-1}(\rho_{s}(y))=\theta_{b(s,y)}(\varphi^{-1}(y))$ for $(s,y)\in S\ltimes Y$,
		where $c_{\theta}(m,x)=m$ for $(m,x)\in P\ltimes X$. Hence  the maps $\varphi$, $a$ and $b$ give rise to the continuous  one-sided orbit equivalence of $(X,P,\theta)$ and $(Y,S,\rho)$.
	\end{proof}
	
	Let us compare conjugacy with continuous one-sided orbit equivalence.
	
	\begin{proposition} If two semigroup actions $(X,P,\theta)$ and $(Y,S,\rho)$ are conjugate, then they are continuously one-sided orbit equivalent. Moreover, if $X$ and $Y$ are connected and both of actions are essentially free, then the converse holds.
		
	\end{proposition}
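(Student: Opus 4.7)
For the forward direction, suppose $(X,P,\theta)$ and $(Y,S,\rho)$ are conjugate via a homeomorphism $\varphi:X\to Y$ and a semigroup isomorphism $\alpha:P\to S$. I will simply set $a(m,x)=\alpha(m)$ and $b(s,y)=\alpha^{-1}(s)$, which are continuous as they are constant in the space variable. Equations (2.1) and (2.2) are then immediate from the conjugacy relations $\varphi\theta_m=\rho_{\alpha(m)}\varphi$ and $\varphi^{-1}\rho_s=\theta_{\alpha^{-1}(s)}\varphi^{-1}$, so $(X,P,\theta)\sim_{csoe}(Y,S,\rho)$.

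For the converse, assume $X$ and $Y$ are connected, both actions are essentially free, and $(X,P,\theta)\sim_{csoe}(Y,S,\rho)$ via data $(\varphi,a,b)$ as in Definition 2.2. The decisive observation is that $S$ and $P$ are discrete. For each fixed $m\in P$, the map $x\mapsto a(m,x)$ is a continuous function from the connected space $X$ into the discrete countable space $S$, hence constant. Define $\alpha(m)\in S$ to be this common value. Symmetrically, connectedness of $Y$ gives a well-defined $\beta:S\to P$ with $b(s,y)=\beta(s)$ for all $y\in Y$. Then (2.1) becomes the conjugacy identity $\varphi\theta_m=\rho_{\alpha(m)}\varphi$, and (2.2) becomes $\varphi^{-1}\rho_s=\theta_{\beta(s)}\varphi^{-1}$.

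It remains to verify that $\alpha$ is a semigroup isomorphism. Applying Lemma 2.5 with $a(\cdot,\cdot)$ constant in the second argument yields $\alpha(nm)=\alpha(n)\alpha(m)$ for all $n,m\in P$, so $\alpha$ is a semigroup homomorphism; Corollary 2.7 (or the observation that $a_x(e)=e$) gives $\alpha(e)=e$. Bijectivity follows from Lemma 2.6: the identities $b(a(m,x),\varphi(x))=m$ and $a(b(s,y),\varphi^{-1}(y))=s$ reduce, under the constancy just established, to $\beta(\alpha(m))=m$ and $\alpha(\beta(s))=s$, so $\beta=\alpha^{-1}$. Thus $(\varphi,\alpha)$ implements a conjugacy between $(X,P,\theta)$ and $(Y,S,\rho)$.

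The only substantive point is the passage from continuity of $a$ and $b$ to constancy in the space variable; everything else is a routine repackaging of Lemmas 2.5, 2.6 and Corollary 2.7. The role of the essential freeness hypothesis is exactly to ensure (via Remark 2.4) that $a$ and $b$ are the unique such cocycles, so that the semigroup-theoretic identities supplied by those lemmas apply.
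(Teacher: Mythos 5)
Your proof is correct and follows essentially the same route as the paper: constant cocycles for the forward direction, and for the converse, connectedness plus discreteness of $S$ and $P$ to force constancy in the space variable, then Lemmas 2.5, 2.6 and Corollary 2.7 to make $\alpha$ a semigroup isomorphism implementing a conjugacy. One small quibble with your closing remark: essential freeness is not needed merely to make $a$ and $b$ unique (Remark 2.4); it is a direct hypothesis in the proofs of Lemmas 2.5, 2.6 and Corollary 2.7 themselves, where it is used to cancel the maps $\rho$ and $\theta$ in the relevant identities.
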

	
	\begin{proof} Assume that $(X,P,\theta)$ and $(Y,S,\rho)$ are conjugate and maps $\varphi $ and $\alpha$ satisfy Definition 2.1 (i). Define $a(m,x)=\alpha(m)$ for $m\in P $, $x\in X$ and $b(s,y)=\alpha^{-1}(s)$ for $s\in S $,  $y\in Y$. Then $a$ and $b$ are continuous on their respective domains.  We can see that $\varphi$, $a$ and $b$ satisfy Definition 2.2, thus $(X,P,\theta)\sim_{csoe}(Y,S,\rho)$.

		Conversely, assume that $X$ and $Y$ are connected, and $(X,P,\theta)$ and $(Y,S,\rho)$ are essentially free and continuously one-sided orbit equivalent. Let $\varphi$, $a$ and $b$ be as in  Definition 2.2.  Then for every $m\in P$, $a|_{\{m\}\times X}$ is constant, thus we can define $\alpha(m)=a(m,x)$ for $m\in P$. It follows from Lemma 2.5, Lemma 2.6 and Corollary 2.7 that $\alpha:\; P\rightarrow S$ is a semigroup isomorphism satisfying that $\varphi(\theta_m(x))=\rho_{\alpha(m)}(\varphi(x))$ for each $m\in P$ and $x\in X$. Thus $(X,P,\theta)$ and $(Y,S,\rho)$ are conjugate.
	\end{proof}

	\section{Continuous Orbit Equivalence }
	
	Let $(X,P,\theta)$ be a semigroup action as in Section 2. Set $$X_{(m,n)}\, :=\{(x,y)\in X\times X \; | \; \theta_{m}(x)=\theta_{n}(y) \}\,\, \mbox{ for $(m,n)\in P\times P$},$$
$$X_{P,\theta}\, := \{(m,n,x,y)\in P\times P\times X\times X: (m,n)\in P\times P, (x,y)\in X_{(m,n)}\}.$$
	Then each $X_{(m,n)}$ is a nonempty compact subset in $X\times X$ and the latter is a topological subspace of the product topology space $ P\times P\times X \times X$.
Recall that two semigroup actions $(X,P,\theta)$ and $(Y,S,\rho)$ are orbit equivalent if there exists a homeomorphism $\varphi$ preserving each full orbit from $X$ onto $Y$. In this case, for  $(m,n)\in P\times P$ and  $(x,y)\in X_{(m,n)}$, there exist $s,t$ (depending on $m,n,x,y$) in $S$ such that $\rho_{s}(\varphi(x))=\rho_{t}(\varphi(y))$. Symmetrically, for  $(s,t)\in S\times S$ and $(u,v)\in Y_{(s,t)}$, there exist $m,n$ (depending on $s,t,u,v$) in $P$ such that $\theta_{m}(\varphi^{-1}(u))=\theta_{n}(\varphi^{-1}(v))$.
The following notion is a continuous version of orbit equivalence.
	
	\begin{definition}
		Two semigroup actions $(X,P,\theta)$ and $(Y,S,\rho)$ are \emph{continuously   orbit equivalent} (we write $(X,P,\theta)\sim_{ coe}(Y,S,\rho)$) if  there exist a homeomorphism $\varphi:\, X\rightarrow Y$,  continuous mappings $a_{1} ,b_{1} :\;  X_{P,\theta}\rightarrow S$    and  $a_{2} ,b_{2} : Y_{S,\rho} \rightarrow P $ such that  $$ \rho_{a_{1}(m,n,x,y)}(\varphi(x))=\rho_{b_{1} (m,n,x,y)}(\varphi(y)) \;\; \text{for} \;   (x,y)\in X_{(m,n)},   \eqno{(3.1)}$$
		$$\theta_{a_{2} (s,t,u,v)}(\varphi^{-1}(u))= \theta_{b_{2}(s,t,u,v)}(\varphi^{-1}( v))  \;\; \text{for} \;   (u,v)\in Y_{(s,t)}. \eqno{(3.2)}$$
	\end{definition}

	\begin{proposition} If $(X,P,\theta)$ and $(Y,S,\rho)$ are continuously one-sided orbit equivalent, then they are continuously orbit equivalent.
	\end{proposition}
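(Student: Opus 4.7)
The plan is to use the data of the continuous one-sided orbit equivalence directly to construct the four maps required by Definition 3.1. Let $\varphi:X\to Y$ together with continuous maps $a:P\times X\to S$ and $b:S\times Y\to P$ witness $(X,P,\theta)\sim_{csoe}(Y,S,\rho)$, so that (2.1) and (2.2) hold.

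I would define
\[
a_{1}(m,n,x,y)=a(m,x),\qquad b_{1}(m,n,x,y)=a(n,y)\quad \text{for } (m,n,x,y)\in X_{P,\theta},
\]
and symmetrically
\[
a_{2}(s,t,u,v)=b(s,u),\qquad b_{2}(s,t,u,v)=b(t,v)\quad \text{for } (s,t,u,v)\in Y_{S,\rho}.
\]
Continuity of these four maps is immediate since each is the composition of a continuous coordinate projection from $X_{P,\theta}$ or $Y_{S,\rho}$ (viewed as a subspace of $P\times P\times X\times X$ or $S\times S\times Y\times Y$) with the continuous map $a$ or $b$.

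To verify (3.1), fix $(x,y)\in X_{(m,n)}$, so $\theta_{m}(x)=\theta_{n}(y)$. Applying $\varphi$ and using (2.1) on both sides gives
\[
\rho_{a(m,x)}(\varphi(x))=\varphi(\theta_{m}(x))=\varphi(\theta_{n}(y))=\rho_{a(n,y)}(\varphi(y)),
\]
which is exactly $\rho_{a_{1}(m,n,x,y)}(\varphi(x))=\rho_{b_{1}(m,n,x,y)}(\varphi(y))$. The verification of (3.2) is the mirror-image computation: for $(u,v)\in Y_{(s,t)}$ one applies $\varphi^{-1}$ and uses (2.2) to get $\theta_{b(s,u)}(\varphi^{-1}(u))=\theta_{b(t,v)}(\varphi^{-1}(v))$.

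There is essentially no obstacle here; the statement is a direct unpacking of the definitions once one observes that the $\varphi$-image of a pair $(x,y)$ with a common $\theta$-image automatically has a common $\rho$-image via $a$, and vice versa via $b$. Essential freeness is not needed for this implication, only the existence of $a$ and $b$ as continuous functions on the product domains.
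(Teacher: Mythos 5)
Your proposal is correct and follows exactly the paper's own argument: define $a_{1}(m,n,x,y)=a(m,x)$, $b_{1}(m,n,x,y)=a(n,y)$ (and symmetrically $a_2$, $b_2$ from $b$), with continuity inherited from $a$ and $b$, and verify (3.1)--(3.2) by applying $\varphi$ (resp.\ $\varphi^{-1}$) to the common image $\theta_m(x)=\theta_n(y)$ (resp.\ $\rho_s(u)=\rho_t(v)$). Your observation that essential freeness is not needed also matches the paper, which states this proposition without that hypothesis.
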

	
	\begin{proof}
		Let $\varphi$, $a$ and $b$ be three  maps satisfying Definition 2.2.  For $m,n\in P$ and $(x,y)\in X_{(m,n)}$, define $a_{1}(m,n,x,y) =a(m,x)$ and $ b_{1}(m,n,x,y)=a(n,y)$. Then
		$a_{1},b_{1}: X_{P,\theta} \rightarrow S$    are continuous. Since $\theta_m(x)=\theta_n(y)$ for $(x,y)\in X_{(m,n)}$, it follows from (2.1) that $ \rho_{a_{1}(m,n,x,y) }(\varphi (x))= \rho_{b_{1}(m,n,x,y) }(\varphi (y))$.
		
	Similarly, we can  construct   continuous maps    $a_{2} , b_{2} : Y_{S,\rho} \rightarrow P $  satisfying (3.2). Thus $(X,P,\theta)$ and $(Y,S,\rho)$ are continuously orbit equivalent.
	\end{proof}

	Let $\mathcal{G}(X,P,\theta)$ be the transformation groupoid associated with $(X,P,\theta)$.  Clearly, each basic open subset of the form $\Sigma(U,m,n,V)$, denoted by $A$, of $\mathcal{G}(X,P,\theta)$  induces a homeomorphism $\alpha_A:\, x\in V\rightarrow (\theta_m|_{U})^{-1}(\theta_n(x))\in U$, where  $m,n\in P$ and  $U,V\subset X$ are open  such that $\theta_{m}|_{U} ,\theta_{n}|_{ V} $ are homeomorphisms and $\theta_{m} (U) =\theta_{n} ( V)$. Thus
$A=\{(\alpha_A(x), mn^{-1}, x):\,\, x\in V\}$.

	In the rest of this section, we characterize continuous orbit equivalence of semigroup actions in terms of the transformation  groupoids. Given two semigroup actions $(X,P,\theta)$ and $(Y,S,\rho)$, we let
 $G$ and $H$ be two related countable groups satisfying that  $P\subseteq G$, $S\subseteq H$ and the assumption in Section 2.

	\begin{lemma}
		For an essentially free semigroup action $(X,P,\theta)$, let $\alpha : \; U\rightarrow W$ be a homeomorphism between nonempty open subsets of $X$. Assume that there are continuous maps  $k,l :\; U\rightarrow P$ such that $\theta_{k(z)}(\alpha(z))=\theta_{l(z)}(z)$ for each $z\in U$. Then, for each $x\in U$, there is a unique $g\in G$  with the property that  there exist $k_{0}, l_{0} \in P$ and an open subset $V$ such that $g=k_{0}l_{0}^{-1}$, $x\in V \subseteq U$ and $\theta_{k_{0}}(\alpha(z))=\theta_{l_{0}}(z)$ for every $z\in V$.
		
		Moreover, if $k_1,l_1:\, U\rightarrow P$ are  another continuous maps such that $\theta_{k_1(z)}(\alpha(z))=\theta_{l_1(z)}(z)$ for all $z\in U$, then $k_1(x)l_1(x)^{-1}=k(x)l(x)^{-1}$ for each $x\in U$.
	\end{lemma}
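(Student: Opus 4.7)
The plan is to prove existence and uniqueness separately, with the discrete topology on $P$ handling existence and essential freeness doing the decisive work for uniqueness.

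For existence, I would exploit the fact that $P$ is discrete, so the continuous maps $k, l : U \to P$ are locally constant. Given $x \in U$, I would pick an open neighborhood $V \subseteq U$ of $x$ on which both $k$ and $l$ are constant with values $k_0 := k(x)$ and $l_0 := l(x)$, and set $g := k_0 l_0^{-1} \in G$. The hypothesis then gives $\theta_{k_0}(\alpha(z)) = \theta_{l_0}(z)$ for all $z \in V$, as required.

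The heart of the argument is uniqueness. Suppose another triple $(k_0', l_0', V')$, with $x \in V' \subseteq U$, also satisfies $\theta_{k_0'}(\alpha(z)) = \theta_{l_0'}(z)$ on $V'$, and set $g' := k_0' l_0'^{-1}$. Factorizations in $G$ need not match in $P$, so the key maneuver is to bring both equations onto a common footing inside $P$: using $G = PP^{-1}$, write $k_0'^{-1} k_0 = ab^{-1}$ with $a, b \in P$, so that $k_0 b = k_0' a$ in $P$. Applying $\theta_b$ to the first equation and $\theta_a$ to the second, and using the composition rule $\theta_n \theta_m = \theta_{mn}$, gives
\[
\theta_{k_0 b}(\alpha(z)) = \theta_{l_0 b}(z), \qquad \theta_{k_0' a}(\alpha(z)) = \theta_{l_0' a}(z)
\]
for every $z \in V \cap V'$. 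Since $k_0 b = k_0' a$, the left-hand sides agree, so $\theta_{l_0 b}$ and $\theta_{l_0' a}$ coincide on the nonempty open set $V \cap V'$. Essential freeness of $(X,P,\theta)$ then forces $l_0 b = l_0' a$ in $P$, and combining this with $k_0 b = k_0' a$ yields $k_0 l_0^{-1} = k_0' l_0'^{-1}$, i.e.\ $g = g'$.

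The moreover clause follows immediately by applying the uniqueness assertion at each $x$: local constancy of $(k_1, l_1)$ produces a triple $(k_1(x), l_1(x), V_1)$ to which uniqueness applies, forcing $k_1(x) l_1(x)^{-1}$ to coincide with the unique $g$ determined by $x$, which is also $k(x) l(x)^{-1}$. The main obstacle throughout is bridging the gap between an identity in $G$ (the factorization $g = k_0 l_0^{-1}$) and an identity in $P$ that essential freeness can act on; the device of writing $k_0'^{-1} k_0 = ab^{-1}$ and multiplying through is precisely what closes this gap.
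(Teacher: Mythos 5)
Your proof is correct and follows essentially the same route as the paper's: local constancy of $k,l$ (discreteness of $P$) for existence, then for uniqueness the same device of writing $k_0'^{-1}k_0 = ab^{-1}$ (the paper writes $k_0^{-1}k_0' = pq^{-1}$, a purely notational difference) to move the comparison into $P$, where essential freeness on the open set $V\cap V'$ forces the identity. The moreover clause is also handled exactly as in the paper, by invoking the uniqueness assertion pointwise.
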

	
	\begin{proof}
		For $x\in X$, let $k_{0}=k(x)$, $l_{0}=l(x)$ and $g=k_{0}l_{0}^{-1}$. Since $k,l :\; U\rightarrow P$ are continuous at $x$, there exists an open subset $V$ such that $x\in V\subseteq U$ and $k(z)=k(x)$,  $l(z)=l(x)$ for every $z\in V$. Thus $\theta_{k_{0}}(\alpha(z))=\theta_{l_{0}}(z)$ for each $z\in V$.
		
		For the uniqueness of $g$, assume that $g'\in G$, $k_{0}'$, $l_{0}'\in P$ and $V'$ is an open subset such that $g'=k_{0}'l_{0}'^{-1}$, $x\in V'\subseteq U$ and $\theta_{k_{0}'}(\alpha(z))=\theta_{l_{0}'}(z)$ for all $z\in V'$. Put $U'=V\cap V'$ and choose $p,q\in P$ such that $k_{0}^{-1}k_{0}'=pq^{-1}$. Thus $k_{0}p=k_{0}'q$, $\theta_{k_{0}p}(\alpha(z))=\theta_{l_{0}p}(z)$ and $\theta_{k_{0}'q}(\alpha(z))=\theta_{l_{0}'q}(z)$, which implies that
$\theta_{l_{0}p}(z)=\theta_{l_{0}'q}(z)  $ for each $z\in U'$. Essential  freeness implies that $l_{0}p=l_{0}'q$, thus $l_{0}^{-1}l_{0}'=pq^{-1}=k_{0}^{-1}k_{0}'$. Hence $ g=k_{0}l_{0}^{-1}=k_{0}'l_{0}'^{-1}=g' $.
		
		If $k_1,l_1:\, U\rightarrow P$ are  another continuous maps such that $\theta_{k_1(z)}(\alpha(z))=\theta_{l_1(z)}(z)$ for all $z\in U$. For $x\in X$, from the above proof, if let $k_0'=k_1(x)$, $l_0'=l_1(x)$ and $g'=k_{0}'l_{0}'^{-1}$, then there is an open subset $V'$ such that $x\in V' \subseteq U$ and $\theta_{k_{0}'}(\alpha(z))=\theta_{l_{0}'}(z)$ for all $z\in V'$. By  the above uniqueness, we have
		$g'=g$, i.e., $k_1(x)l_1(x)^{-1}=k(x)l(x)^{-1}$.
	\end{proof}

	\begin{lemma} Let $(X,P,\theta)$ and  $(Y,S,\rho)$ be continuously orbit equivalent and essentially free, and let $\varphi, a_{1}, b_{1},a_{2},b_{2}$ be as in Definition 3.1.	
		If  $ m_{1}n_{1}^{-1}=m_{2}n_{2}^{-1}$ and $ s_{1}t_{1}^{-1}=s_{2}t_{2}^{-1} $ for $m_{i},n_{i} \in P$, $s_{i},t_{i}\in S$ and $i=1,2$, then
		$$ a_{1}(m_{1},n_{1},x,y)b_{1}(m_{1},n_{1},x,y)^{-1}=a_{1}(m_{2},n_{2},x,y)b_{1}(m_{2},n_{2},x,y)^{-1}, $$
		$$ a_{2}(s_{1},t_{1},u,v)b_{2}(s_{1},t_{1},u,v)^{-1}=a_{2}(s_{2},t_{2},u,v)b_{2}(s_{2},t_{2},u,v)^{-1} $$
		for $(x,y) \in X_{(m_{1},n_{1})}\bigcap X_{(m_{2},n_{2})}$  and $(u,v)\in Y_{(s_{1},t_{1})}\bigcap Y_{(s_{2},t_{2})}   $.
		
	\end{lemma}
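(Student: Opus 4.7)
The plan is to reduce both identities to the uniqueness assertion of Lemma 3.3. I work point by point: fix $(x_0,y_0)\in X_{(m_1,n_1)}\cap X_{(m_2,n_2)}$, construct a single local homeomorphism $\alpha$ defined on a neighbourhood of $y_0$ that witnesses both relations $\theta_{m_i}\circ\alpha=\theta_{n_i}$ simultaneously, repackage $a_1,b_1$ as continuous maps on the source of $\alpha$, push everything across $\varphi$ to get two continuous pairs of maps into $S$ satisfying the hypothesis of Lemma 3.3 for the essentially free action $(Y,S,\rho)$, and invoke uniqueness. The second identity follows from the symmetric procedure using $\varphi^{-1}$ and Lemma 3.3 for $(X,P,\theta)$.

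The main obstacle is the construction of the common local homeomorphism $\alpha$. Since each $\theta_{m_i}$ and $\theta_{n_i}$ is a local homeomorphism, standard shrinking provides open neighbourhoods $U_i\ni x_0$ and $V_i\ni y_0$ and homeomorphisms $\alpha_i\colon V_i\to U_i$ given by $\alpha_i(y)=(\theta_{m_i}|_{U_i})^{-1}\theta_{n_i}(y)$, with $\alpha_i(y_0)=x_0$. A priori $\alpha_1\neq\alpha_2$; to force $\alpha_1=\alpha_2$ near $y_0$ I use $G=PP^{-1}$ together with $m_1n_1^{-1}=m_2n_2^{-1}$ to pick $p,q\in P$ with $m_1q=m_2p$ and $n_1q=n_2p$. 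Applying $\theta_q$ to $\theta_{m_1}(\alpha_1(y))=\theta_{n_1}(y)$ rewrites as $\theta_p\bigl(\theta_{m_2}(\alpha_1(y))\bigr)=\theta_p\bigl(\theta_{n_2}(y)\bigr)$. At $y=y_0$ both arguments equal $\theta_{n_2}(y_0)=\theta_{m_2}(x_0)$, and since $\theta_p$ is a local homeomorphism it is injective on some open neighbourhood $W$ of this common value. After shrinking to a small enough $V\subseteq V_1\cap V_2$ so that $\theta_{m_2}(\alpha_1(y))$ and $\theta_{n_2}(y)$ both lie in $W$ and $\alpha_1(y)\in U_2$, injectivity of $\theta_p|_W$ yields $\theta_{m_2}(\alpha_1(y))=\theta_{n_2}(y)$, and then injectivity of $\theta_{m_2}|_{U_2}$ forces $\alpha_1(y)=\alpha_2(y)$ throughout $V$. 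Set $\alpha:=\alpha_1|_V=\alpha_2|_V$.

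With $\alpha$ in hand, define $k_i(y):=a_1(m_i,n_i,\alpha(y),y)$ and $l_i(y):=b_1(m_i,n_i,\alpha(y),y)$ for $y\in V$; continuity of $a_1,b_1$ on $X_{P,\theta}$ makes $k_i,l_i\colon V\to S$ continuous, and (3.1) gives $\rho_{k_i(y)}(\varphi(\alpha(y)))=\rho_{l_i(y)}(\varphi(y))$. Transferring through $\varphi$ by setting $\tilde\alpha:=\varphi\circ\alpha\circ\varphi^{-1}$ on $\varphi(V)$, $\tilde k_i:=k_i\circ\varphi^{-1}$ and $\tilde l_i:=l_i\circ\varphi^{-1}$ yields a homeomorphism $\tilde\alpha$ between open subsets of $Y$ and two continuous pairs $(\tilde k_i,\tilde l_i)$ with $\rho_{\tilde k_i(y)}(\tilde\alpha(y))=\rho_{\tilde l_i(y)}(y)$ on $\varphi(V)$. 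Lemma 3.3 applied to the essentially free $(Y,S,\rho)$ then gives $\tilde k_1(y)\tilde l_1(y)^{-1}=\tilde k_2(y)\tilde l_2(y)^{-1}$ for every $y\in\varphi(V)$; evaluating at $y=\varphi(y_0)$ is precisely the first identity at $(x_0,y_0)$. The second identity is obtained by the same reasoning with the roles reversed: fix $(u_0,v_0)\in Y_{(s_1,t_1)}\cap Y_{(s_2,t_2)}$, build a common local homeomorphism $\beta$ near $v_0$ in $Y$ using $H=SS^{-1}$ in place of $G=PP^{-1}$, form the analogous pairs via $a_2,b_2$ and (3.2), transfer by $\varphi^{-1}$, and apply Lemma 3.3 to the essentially free $(X,P,\theta)$.
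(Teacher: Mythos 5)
Your proposal is correct, and its skeleton coincides with the paper's: localize at a point of $X_{(m_1,n_1)}\cap X_{(m_2,n_2)}$, produce one local homeomorphism $\alpha$ witnessing both relations $\theta_{m_i}(\alpha(z))=\theta_{n_i}(z)$ simultaneously, and then let essential freeness of $(Y,S,\rho)$ force equality of the two elements of $H$ (and symmetrically for the second identity). You deviate in the two places where this skeleton needs flesh, and both deviations are sound. First, to get the common $\alpha$, the paper intersects the two basic open bisections $A=\Sigma(U_1,m_1,n_1,V_1)$ and $B=\Sigma(U_2,m_2,n_2,V_2)$ through the single groupoid element $(x,m_1n_1^{-1},y)=(x,m_2n_2^{-1},y)$ and reads off $\alpha_A=\alpha_B$ near $y$ from openness of $\widetilde{A}\cap\widetilde{B}$; you prove the same agreement by hand, writing $m_2^{-1}m_1=n_2^{-1}n_1=pq^{-1}$ with $p,q\in P$ and using local injectivity of $\theta_p$ and of $\theta_{m_2}$ (your use of the right-action convention $\theta_q\theta_{m_1}=\theta_{m_1q}$ is correct throughout). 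This is more elementary: it re-derives from the semigroup axioms exactly the property of the groupoid topology that the paper quotes, so nothing about $\mathcal{G}(X,P,\theta)$ is needed. Second, to conclude, the paper re-runs the essential-freeness argument inline (shrinking so that $k_i,l_i$ are locally constant, choosing $p,q\in S$ with $k_1(y)^{-1}k_2(y)=pq^{-1}$, and cancelling), whereas you transport $\alpha,k_i,l_i$ through $\varphi$ and cite the uniqueness (``Moreover'') clause of Lemma 3.3 for the essentially free action $(Y,S,\rho)$; that reuse is legitimate, since Lemma 3.3 is stated for an arbitrary essentially free action and your transferred data $\tilde\alpha=\varphi\circ\alpha\circ\varphi^{-1}$, $\tilde{k}_i=k_i\circ\varphi^{-1}$, $\tilde{l}_i=l_i\circ\varphi^{-1}$ satisfies its hypotheses verbatim, and it mirrors how the paper itself invokes Lemma 3.3 in Remark 3.5. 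Your route buys economy: one uniqueness engine, no constancy reductions, no repetition of the $pq^{-1}$ cancellation. The paper's route keeps all the analysis on the $X$ side and avoids the transfer through $\varphi$, at the cost of duplicating inside the proof the very argument that Lemma 3.3 packages.
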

	
	\begin{proof}
		For $(x,y) \in X_{(m_{1},n_{1})}$, choose an open bisection $A=\Sigma(U_{1},m_{1},n_{1},V_{1})$ such that $(x,m_{1}n_{1}^{-1},y) \in A$. Let $\alpha_{A}$ be the homeomorphism from $V_1$ onto $U_1$ given by $A$, i.e., $\alpha_{A}(z)=(\theta_{m_{1}}|_{U_{1}})^{-1}(\theta_{n_{1}}(z))$ for $z\in V_{1}$. Then $\alpha_A(y)=x$, $A=\{ (\alpha_{A}(z),m_{1}n_{1}^{-1},z)| \; z\in V_{1}\}$ and $(\alpha_A(z),z)\in X_{(m_1,n_1)}$ for each $z\in V_1$.
		Define $k_{1}(z)=a_{1}(m_{1},n_{1},\alpha_{A}(z),z)$ and $l_{1}(z)=b_{1} (m_{1},n_{1},\alpha_{A}(z),z)$ for $z\in V_1$. Then both of them are continuous maps from $V_1$ into $S$. Thus we can choose an open neighbourhood $\widetilde{V_{1}}$ of $y$ such that  $\widetilde{V_{1}}\subseteq V_{1}$, $k_{1}(z)=k_{1}(y)$ and $l_{1}(z)=l_{1}(y)$ for each $z\in \widetilde{V_{1}}$. It follows from (3.1) that  $$\rho_{k_{1}(y)}(\varphi(\alpha_{A}(z)))=\rho_{l_{1}(y)}(\varphi(z))\mbox{ for $z\in \widetilde{V_{1}}$}.$$   Let $\widetilde{A}=\{(\alpha_{A}(z),m_{1}n_{1}^{-1},z)| \; z\in \widetilde{V_{1}} \}\subseteq A$.
		
		For $(x,y) \in   X_{(m_{2},n_{2})}$,  by a similar argument, there exists an open neighbourhood $\widetilde{V_{2}}$ of $y$ such that
		$$\rho_{k_2(y)}(\varphi(\alpha_{B}(z)))=\rho_{l_2(y)}(\varphi(z)) \mbox{ for $z\in \widetilde{V_{2}}$}$$
		where $k_2(y)=a_{1}(m_{2},n_{2}, x,y)$, $l_2(y)=b_{1}(m_{2},n_{2}, x,y)$, and $\alpha_B$ is the homeomorphism given by an open bisection $B= \Sigma(U_{2},m_{2},n_{2},V_{2})$ with $\widetilde{V_2}\subseteq V_2$ and
		$(x,m_{2}n_{2}^{-1},y) \in B$.
		Let $\widetilde{B}=\{(\alpha_{B}(z),m_{2}n_{2}^{-1},z)| \; z\in \widetilde{V_{2}}   \} \subseteq B$.
		
		Note that $\widetilde{A}\cap \widetilde{B}$ is a bisection containing $(x,m_1n_1^{-1},y)$ ($=(x,m_2n_2^{-1},y)$). Then there exists an open subset $V \subseteq\widetilde{V_{1}}\cap \widetilde{V_{2}}$ such that $y\in V $ and $\alpha_{A}(z)=\alpha_{B}(z)$  for each $z\in V$.
		Choose $p,q\in S$ such that $k_{1}(y)^{-1}k_{2}(y)=pq^{-1}$, thus $k_{1}(y)p=k_{2}(y)q$.  Hence it follows from the above equations that $$\rho_{l_{1}(y)p}( \varphi(z))=\rho_{k_{1}(y)p}(\varphi(\alpha_{A}(z)))=\rho_{k_{2}(y)q}(\varphi(\alpha_{B}(z)))=
		\rho_{l_{2}(y) q}(\varphi(z))$$
		for $z\in  V  $. Essential freeness of $(Y,S,\rho)$ implies that $l_{1}(y)p=l_{2}(y)q$, and thus $l_{1}(y)^{-1}l_{2}(y)=k_{1}(y)^{-1}k_{2}(y)$. Hence $  k_{1}(y)l_{1}(y)^{-1}=k_{2}(y)l_{2}(y)^{-1} $, i.e.,
		$ a_{1}(m_{1},n_{1},x,y)b_{1}(m_{1},n_{1},x,y)^{-1}=a_{1}(m_{2},n_{2},x,y)b_{1}(m_{2},n_{2},x,y)^{-1} $.
		
		Similarly, we can see that the equation for $a_2, b_2$ in the lemma holds.
	\end{proof}

\begin{remark}	From Lemma 3.4, both of the maps $a: \mathcal{G}(X,P,\theta)\rightarrow H$ and $b: \mathcal{G} (Y,S,\rho)\rightarrow G$, defined by $$a(x,mn^{-1},y)=a_{1}(m,n,x,y)b_{1}(m,n,x,y)^{-1}$$ and
 $$b(u,st^{-1},v)=a_{2}(s,t,u,v)b_{2}(s,t,u,v)^{-1}$$ are well-defined.  From the first paragraph of the proof for Lemma 3.4,  for $\gamma=(x,mn^{-1},y) \in \mathcal{G}(X,P,\theta) $, there exists an open neighbourhood of the form $A=\Sigma(U ,m ,n ,V )$ of $\gamma$ such that $\rho_{a_{1}(m,n,x,y)}(\varphi(u))=\rho_{b_{1}(m,n,x,y)}(\varphi(v))$ for all $(u,mn^{-1},v)\in A$.
		By (3.1), $\rho_{a_{1}(m,n,u,v)}(\varphi(u))=\rho_{b_{1} (m,n,u,v)}(\varphi(v))$ for all $(u,mn^{-1},v)\in A$.  Let $\alpha_A:\, v\in V\rightarrow (\theta_m)^{-1}(\theta_n(v))\in U$ be the canonical homeomorphism determined by $A$ and define $\alpha: \varphi(v)\in \varphi(V)\rightarrow \varphi(\alpha_A(v))\in\varphi(U)$. It follows from the continuity of $a_1$ and $b_1$ and  Lemma 3.3 that $a_1(m,n,x,y)b_1(m,n,x,y)^{-1}=a_1(m,n,u,v)b_1(m,n,u,v)^{-1}$ for each $(u,mn^{-1},v)\in A$. Hence $a(x,mn^{-1},y)=a(u,mn^{-1}, v)$ for $(u,mn^{-1},v)\in A$.  Consequently, $a$ is continuous. A similar argument shows that $b$ is also continuous. Thus we have the following continuous maps:
		$$\Psi:\,\,(x,mn^{-1},y)\in \mathcal{G}(X,P,\theta)\rightarrow (\varphi(x), a(x,mn^{-1},y), \varphi(y))\in   \mathcal{G}(Y,S,\rho)  $$
		and
		$$\widetilde{\Psi}:\, (u,st^{-1},v)\in  \mathcal{G}(Y,S,\rho)\rightarrow (\varphi^{-1}(u), b(u,st^{-1},v),\varphi^{-1}(v))\in \mathcal{G}(X,P,\theta).$$
\end{remark}

\begin{lemma}
		Let $(X,P,\theta)$ be essentially free and let $m_{i},n_{i} \in P$ for $i=1,2$. If there exists an nonempty open set $U\subset X$ such that for each $x\in U$, there exists $y\in X$ satisfying
		$(x,y) \in X_{(m_{1},n_{1})}\bigcap X_{(m_{2},n_{2})}$, then
		$m_{1}n_{1}^{-1}=m_{2}n_{2}^{-1}$.
		
	\end{lemma}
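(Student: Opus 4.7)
The plan is to reduce the group-theoretic equality $m_1n_1^{-1}=m_2n_2^{-1}$ to an equality in the semigroup $P$ that can be detected by the dynamics, and then invoke essential freeness on the open set $U$ to conclude. Concretely, I will rewrite $m_1n_1^{-1}=m_2n_2^{-1}$ as $m_1(n_1^{-1}n_2)=m_2$, and then use the standing assumption $G=PP^{-1}$ to produce $p,q\in P$ with $n_1^{-1}n_2=pq^{-1}$; equivalently, $n_1p=n_2q$ in $P$. After this rewriting, the desired identity becomes exactly $m_1p=m_2q$ in $P$.

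Next I would exploit the pair $(x,y)\in X_{(m_1,n_1)}\cap X_{(m_2,n_2)}$ supplied by the hypothesis: applying $\theta_p$ to the relation $\theta_{m_1}(x)=\theta_{n_1}(y)$ and $\theta_q$ to $\theta_{m_2}(x)=\theta_{n_2}(y)$ (using that $\theta$ is a right action, so $\theta_p\theta_{m_1}=\theta_{m_1p}$, etc.), I obtain
\[
\theta_{m_1p}(x)=\theta_{n_1p}(y)=\theta_{n_2q}(y)=\theta_{m_2q}(x),
\]
where the middle equality is precisely $n_1p=n_2q$. Crucially, although the element $y$ depends on $x$, it is eliminated from the final equation, so $\theta_{m_1p}(x)=\theta_{m_2q}(x)$ holds for \emph{every} $x\in U$.

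Finally, since $U$ is a nonempty open subset of $X$ contained in the set $\{z\in X:\theta_{m_1p}(z)=\theta_{m_2q}(z)\}$, the essential freeness of $(X,P,\theta)$ forces $m_1p=m_2q$, which translates back to $m_1n_1^{-1}=m_2n_2^{-1}$ in $G$, as required.

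I do not expect any real obstacle: the only subtle point is keeping the multiplication order straight for the right action (so that applying $\theta_p$ or $\theta_q$ produces the correct element $m_1p$ or $m_2q$ rather than $pm_1$ or $qm_2$) and correctly using $G=PP^{-1}$ to clear the inverse $n_1^{-1}n_2$ into $P$ so that essential freeness, which is stated only for elements of $P$, can be applied.
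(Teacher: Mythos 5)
Your proof is correct and is essentially identical to the paper's: both decompose $n_1^{-1}n_2=pq^{-1}$ with $p,q\in P$ using $G=PP^{-1}$, derive $\theta_{m_1p}(x)=\theta_{n_1p}(y)=\theta_{n_2q}(y)=\theta_{m_2q}(x)$ so that the auxiliary point $y$ drops out, and then apply essential freeness on the open set $U$ to get $m_1p=m_2q$, hence $m_1n_1^{-1}=m_2n_2^{-1}$. No gaps; the care you take with the right-action convention $\theta_p\theta_{m_1}=\theta_{m_1p}$ matches the paper's usage exactly.
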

	
	\begin{proof}
		Let  $n_{1}^{-1}n_{2}=pq^{-1}$ for $p, q\in P$. Then $n_{1}p=n_{2}q$. For each $x\in U$, by assumption, there is $y\in X$ such that $\theta_{m_i}(x)=\theta_{n_i}(y)$ for $i=1,2$, thus  $\theta_{m_{1}p}(x)= \theta_{n_{1}p}(y) = \theta_{n_{2}q}(y)= \theta_{m_{2}q}(x)$ for all $x\in U$.
		Essential freeness implies that $m_{1}p=m_{2}q$, then $m_{1}n_{1}^{-1}=m_{2}n_{2}^{-1}$.	
	\end{proof}

	\begin{lemma}
		The   mappings  $a$ and $b$ defined in Remark 3.5 are    cocycles on  $\mathcal{G}(X,P,\theta)$ and $\mathcal{G} (Y,S,\rho)$, respectively.
		
	\end{lemma}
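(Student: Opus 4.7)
The plan is to verify the cocycle identity $a(\gamma_1\gamma_2) = a(\gamma_1) a(\gamma_2)$ for composable $\gamma_1, \gamma_2 \in \mathcal{G}(X,P,\theta)$, and the analogous identity for $b$. Continuity of $a$ and $b$ has already been established in Remark 3.5, so multiplicativity is all that remains. I will focus on $a$, as the argument for $b$ is symmetric with the roles of $(X,P,\theta)$ and $(Y,S,\rho)$ interchanged.

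Given composable $\gamma_1 = (x, m_1 n_1^{-1}, y)$ and $\gamma_2 = (y, m_2 n_2^{-1}, z)$, the first step is to re-express $\gamma_1\gamma_2$ in canonical form $(x, pq^{-1}, z)$ with $(x,z) \in X_{(p,q)}$. Using $G = PP^{-1}$, I factor $n_1^{-1} m_2 = p_1 q_1^{-1}$ with $p_1, q_1 \in P$; then $p := m_1 p_1$ and $q := n_2 q_1$ lie in $P$, satisfy $\theta_p(x) = \theta_q(z)$, and $pq^{-1} = m_1 n_1^{-1} m_2 n_2^{-1}$, so that $a(\gamma_1\gamma_2) = a_1(p,q,x,z)\, b_1(p,q,x,z)^{-1}$. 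Abbreviating $\alpha_i, \beta_i$ for the values of $a_1, b_1$ on $\gamma_i$ and writing $\beta_1^{-1}\alpha_2 = s_0 t_0^{-1}$ in $H$, I apply $\rho_{s_0}$ and $\rho_{t_0}$ to the identities (3.1) for $\gamma_1, \gamma_2$ respectively to obtain
\[
\rho_{\alpha_1 s_0}(\varphi(x)) = \rho_{\beta_1 s_0}(\varphi(y)) = \rho_{\alpha_2 t_0}(\varphi(y)) = \rho_{\beta_2 t_0}(\varphi(z)),
\]
whence $(\alpha_1 s_0)(\beta_2 t_0)^{-1} = \alpha_1 \beta_1^{-1} \alpha_2 \beta_2^{-1} = a(\gamma_1) a(\gamma_2)$.

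The remaining task is the identification $a_1(p,q,x,z) b_1(p,q,x,z)^{-1} = (\alpha_1 s_0)(\beta_2 t_0)^{-1}$ in $H$. The strategy is to upgrade the pointwise identities above to relations on a common open neighborhood. I choose open bisections $B_1, B_2 \subseteq \mathcal{G}(X,P,\theta)$ around $\gamma_1, \gamma_2$, take $A := B_1 B_2$ as an open bisection containing $\gamma_1\gamma_2$, and shrink until the continuous $S$-valued maps $a_1(m_i, n_i, \cdot)$, $b_1(m_i, n_i, \cdot)$, $a_1(p,q,\cdot)$, $b_1(p,q,\cdot)$ are all constant on the corresponding bisections. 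Then, with $\alpha_A$ the homeomorphism induced by $A$ on a neighborhood $V$ of $z$, the pointwise identities extend to
\[
\rho_{a_1(p,q,x,z)}(\varphi(\alpha_A(w))) = \rho_{b_1(p,q,x,z)}(\varphi(w)) \,\,\text{and}\,\, \rho_{\alpha_1 s_0}(\varphi(\alpha_A(w))) = \rho_{\beta_2 t_0}(\varphi(w))
\]
for every $w \in V$. These supply two pairs of constant \emph{cocycle data} for the homeomorphism $\varphi \circ \alpha_A \circ \varphi^{-1} : \varphi(V) \to \varphi(\alpha_A(V))$, and the $Y$-side analog of Lemma 3.3 (established verbatim using essential freeness of $(Y,S,\rho)$) forces the two resulting quotients in $H$ to coincide, giving the desired identity.

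The main obstacle I anticipate is the bookkeeping in the last step: matching $\alpha_A$ from the composed bisection with $\alpha_{B_1} \circ \alpha_{B_2}$ on a common neighborhood of $z$, and shrinking to a single open set on which all relevant $S$-valued functions are simultaneously constant so that Lemma 3.3 applies on the $Y$-side. Once these technicalities are handled, the cocycle identity for $a$ follows immediately, and that for $b$ is obtained by the symmetric argument using essential freeness of $(X,P,\theta)$.
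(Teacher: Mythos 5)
Your proposal is correct and follows essentially the same route as the paper's proof: the same factorization $n_1^{-1}m_2 = p q^{-1}$ to put $\gamma_1\gamma_2$ in canonical form, the same chaining via $s_0,t_0$ with $\beta_1 s_0=\alpha_2 t_0$, and the same shrinking of bisections (the paper takes a basic bisection $C\supseteq AB$ rather than $A=B_1B_2$ itself) to upgrade the pointwise identities (3.1) to an open neighborhood before invoking essential freeness of $(Y,S,\rho)$. The only cosmetic difference is that the paper concludes with the $Y$-side analogue of Lemma 3.6 rather than the uniqueness clause of Lemma 3.3; these are interchangeable essential-freeness uniqueness statements, so this does not change the argument.
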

	\begin{proof}
		Since the argument to deal with $a$ and $b$ is similar, we only consider the map $a$. Let $\gamma_{1}=(x_{0},m_{1}n_{1}^{-1},y_{0}), \gamma_{2}=(y_{0},m_{2}n_{2}^{-1},z_{0})\in \mathcal{G}(X,P,\theta)$ and write $\eta=\gamma_{1}\gamma_{2}=(x_{0},m_{1}n_{1}^{-1}m_{2}n_{2}^{-1},z_{0})$. Choose $p,q\in P$ satisfying that $n_{1}^{-1}m_{2}=pq^{-1}  $. Then $n_{1}p=m_{2}q$ and $\eta=(x_{0},m_{1}p(n_{2}q)^{-1},z_{0})$. By Remark 3.5, there exist open bisections $A=\Sigma(U_{1},m_{1},n_{1},V_{1})$, $B=\Sigma(U_{2},m_{2},n_{2},V_{2 })$ and $C=\Sigma(W_{1},m_{1}p,n_{2}q,W_{2})$ such that  $  \gamma_{1}\in A , \gamma_{2}\in B, \eta\in C$ and the following statements hold:
		\begin{enumerate}
			\item[(i)]  $\rho_{a_{1}(m_{1},n_{1},x_{0},y_{0})}(\varphi(x))=\rho_{b_{1}(m_{1},n_{1},x_{0},y_{0})}(\varphi(y))$;
			\item[(ii)] $\rho_{a_{1}(m_{2},n_{2},y_{0},z_{0})}(\varphi(u))=\rho_{b_{1}(m_{2},n_{2},y_{0},z_{0})}(\varphi(v))$;
			\item[(iii)] $\rho_{a_{1}(m_{1}p,n_{2}q,x_{0},z_{0})}(\varphi(z))=\rho_{b_{1}(m_{1}p,n_{2}q,x_{0},z_{0})}(\varphi(w))$,
		\end{enumerate}
		for $(x,m_{1}n_{1}^{-1},y)\in A$, $(u,m_{2}n_{2}^{-1},v)\in B$ and $(z,m_{1}p(n_{2}q)^{-1},w)\in C$. By the continuity of multiplication on $\mathcal{G}(X,P,\theta)^{(2)}$, we can assume that $V_{1}=U_{2}$ and  $AB\subset C$. 
		
		For each $z\in \varphi(U_{1})$, choose $\alpha=(x,m_{1}n_{1}^{-1},y)\in A$ and $ \beta=(y,m_{2}n_{2}^{-1},v)\in B $ such that $z=\varphi(x)$ and $\alpha\beta= (x,m_{1}p(n_{2}q)^{-1},v)\in C$. It follows from (i) and (ii) that $\rho_{a_{1}(m_{1},n_{1},x_{0},y_{0})}(z)=\rho_{b_{1}(m_{1},n_{1},x_{0},y_{0})}(\varphi(y))$  and $\rho_{a_{1}(m_{2},n_{2},y_{0},z_{0})}(\varphi(y))=\rho_{b_{1}(m_{2},n_{2},y_{0},z_{0})}(\varphi(v))$.
		Let $s,t\in S$ with $ b_{1}(m_{1},n_{1},x_{0},y_{0})s= a_{1}(m_{2},n_{2},y_{0},z_{0})t$. Thus $\rho_{a_{1}(m_{1},n_{1},x_{0},y_{0})s}(z)=$   $\rho_{b_{1}(m_{2},n_{2},y_{0},z_{0})t}(\varphi(v))  $.
		From (iii),  $\rho_{a_{1}(m_{1}p,n_{2}q,x_{0},z_{0})}(z)=\rho_{b_{1}(m_{1}p,n_{2}q,x_{0},z_{0})}(\varphi(v))  .$
		By Lemma 3.6,  $a_{1}(m_{1},n_{1},x_{0},y_{0})s (b_{1}(m_{2},n_{2},y_{0},z_{0})t)^{-1}$ \newline $=a_{1}(m_{1}p,n_{2}q,x_{0},z_{0})b_{1}(m_{1}p,n_{2}q,x_{0},z_{0})^{-1}$. Thus
		$$a(x_{0},m_{1}n_{1}^{-1},y_{0})a(y_{0},m_{2}n_{2}^{-1},z_{0})=a(x_{0},m_{1}n_{1}^{-1}m_{2}n_{2}^{-1},z_{0}),$$ which implies that $a$ is a cocycle.
	\end{proof}
	
	\begin{lemma}   The   mappings  $a$ and $b$ defined in Remark 3.5 satisfy that
		$$b(\varphi(x),a(x,mn^{-1},y),\varphi(y))=mn^{-1}, $$
		$$a(\varphi^{-1}(u),b(u,st^{-1},v),\varphi^{-1}(v))=st^{-1}, $$
		for every $(x,mn^{-1},y)\in \mathcal{G}(X,P,\theta) $ and $(u,st^{-1},v)\in \mathcal{G}(Y,S,\rho)$.
	\end{lemma}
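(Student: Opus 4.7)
The plan is to reduce the first identity to a uniqueness statement supplied by Lemma 3.3; the second identity will then follow by the same argument with the two actions (and $\varphi$, $\varphi^{-1}$) interchanged. Fix $\gamma=(x,mn^{-1},y)\in\mathcal{G}(X,P,\theta)$ and set $s_1=a_1(m,n,x,y)$, $s_2=b_1(m,n,x,y)$; by Remark 3.5 one has $a(\gamma)=s_1 s_2^{-1}$, and (3.1) gives $\rho_{s_1}(\varphi(x))=\rho_{s_2}(\varphi(y))$, so $(\varphi(x),s_1 s_2^{-1},\varphi(y))\in\mathcal{G}(Y,S,\rho)$. Setting $p=a_2(s_1,s_2,\varphi(x),\varphi(y))$ and $q=b_2(s_1,s_2,\varphi(x),\varphi(y))$, the definition of $b$ yields $b(\varphi(x),s_1 s_2^{-1},\varphi(y))=pq^{-1}$, while (3.2) gives $\theta_p(x)=\theta_q(y)$. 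The first identity therefore reduces to showing $pq^{-1}=mn^{-1}$ in $G$.

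To establish this equality, I would produce two continuous $P$-valued parametrizations witnessing the same local homeomorphism and invoke Lemma 3.3. Choose an open bisection $A=\Sigma(U,m,n,V)$ containing $\gamma$, with associated homeomorphism $\alpha_A:V\to U$ satisfying $\theta_m(\alpha_A(z))=\theta_n(z)$ for $z\in V$. Define continuous maps $k,l:V\to P$ by
$$k(z)=a_2\bigl(a_1(m,n,\alpha_A(z),z),\,b_1(m,n,\alpha_A(z),z),\,\varphi(\alpha_A(z)),\,\varphi(z)\bigr)$$
and the analogous formula with $b_2$ in place of $a_2$ for $l(z)$. Applying (3.1) to $(\alpha_A(z),z)\in X_{(m,n)}$ ensures the quadruples fed into $a_2,b_2$ lie in $Y_{S,\rho}$, and (3.2) then yields $\theta_{k(z)}(\alpha_A(z))=\theta_{l(z)}(z)$ on $V$. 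The constant pair $(z\mapsto m,\,z\mapsto n)$ satisfies the same identity, so the uniqueness clause of Lemma 3.3 forces $k(z)l(z)^{-1}=mn^{-1}$ for all $z\in V$. Specializing to $z=y$, where $\alpha_A(y)=x$ gives $k(y)=p$ and $l(y)=q$, we conclude $pq^{-1}=mn^{-1}$.

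The second identity is proved by repeating the argument symmetrically: start from $(u,st^{-1},v)\in\mathcal{G}(Y,S,\rho)$, pass through an open bisection in $\mathcal{G}(Y,S,\rho)$, and apply the analog of Lemma 3.3 for the essentially free action $(Y,S,\rho)$. The step most apt to cause trouble is the index bookkeeping in the definitions of $k$ and $l$: one must check at each occurrence of $a_i$ or $b_i$ that its arguments satisfy the compatibility relation defining $X_{P,\theta}$ or $Y_{S,\rho}$, which is exactly what (3.1) and (3.2) guarantee, and one must confirm that both the constant and the composite parametrizations witness the \emph{same} homeomorphism $\alpha_A$, so that Lemma 3.3 can equate them pointwise.
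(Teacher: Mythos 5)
Your proposal is correct, but it routes the key step through a different lemma than the paper does. Both arguments make the same initial reduction: with $s_1=a_1(m,n,x,y)$, $s_2=b_1(m,n,x,y)$, $p=a_2(s_1,s_2,\varphi(x),\varphi(y))$, $q=b_2(s_1,s_2,\varphi(x),\varphi(y))$, one must show $pq^{-1}=mn^{-1}$, and both ultimately rest on essential freeness of $(X,P,\theta)$. The paper, however, invokes Lemma 3.6 rather than Lemma 3.3: it freezes the parameters, using Remark 3.5 to get bisections $A\ni(x,mn^{-1},y)$ in $\mathcal{G}(X,P,\theta)$ and $B\ni(\varphi(x),s_1s_2^{-1},\varphi(y))$ in $\mathcal{G}(Y,S,\rho)$ on which (3.1) and (3.2) hold with the \emph{constant} values $s_1,s_2$ and $p,q$, shrinks $A$ so that $\Psi(A)\subseteq B$, and then observes that every $u$ in the open set $r(A)$ has a companion $v$ with $(u,v)\in X_{(m,n)}\cap X_{(p,q)}$, which is exactly the hypothesis of Lemma 3.6. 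You instead keep the parameters \emph{varying}: you compose $a_2,b_2$ with $a_1,b_1,\varphi,\alpha_A$ to get continuous maps $k,l:V\to P$ satisfying $\theta_{k(z)}(\alpha_A(z))=\theta_{l(z)}(z)$, compare against the constant pair $(m,n)$ via the uniqueness clause of Lemma 3.3, and evaluate at $z=y$. Your route buys a cleaner argument: it needs neither the frozen-parameter form of (3.1)/(3.2) from Remark 3.5 nor the continuity of $\Psi$ to arrange $\Psi(A)\subseteq B$, since Lemma 3.3 already packages the localization and essential-freeness bookkeeping; the price is that the well-definedness of $k$ and $l$ requires the careful verification (which you supply) that each nested argument lies in $X_{P,\theta}$ or $Y_{S,\rho}$. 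The paper's route keeps the geometric picture of matched bisections explicit, which is the same mechanism it reuses in Lemma 3.7 and Theorem 3.9.
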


	\begin{proof}
		For an element $(x_{0},mn^{-1},y_{0})\in \mathcal{G}(X,P,\theta)$, let $a_{1}(m,n,x_{0},y_{0})=s$ and  $b_{1}(m,n,x_{0},y_{0})=t$. Then $a(x_{0},mn^{-1},y_{0})=st^{-1}$ and $\Psi(x_{0},mn^{-1},y_{0})=(\varphi(x_{0}),st^{-1},\varphi(y_{0}))\in  \mathcal{G}(Y,S,\rho)$.
		From Remark 3.5, there exist    open bisections $ A=\Sigma(U_{1} ,m ,n ,V_{1} ) $, $B=\Sigma(U_{2} ,s ,t ,V_{2} )$ satisfying   $(x_{0},mn^{-1},y_{0})\in A$, $(\varphi(x_{0}),st^{-1},\varphi(y_{0}))\in B $,
		$\rho_{a_{1}(m,n,x_{0},y_{0})}(\varphi(x))=\rho_{b_{1}(m,n,x_{0},y_{0})}(\varphi(y)) $ for each $(x,mn^{-1},y)\in A$ and $ \theta_{a_{2}(s,t,\varphi(x_{0}),\varphi(y_{0}))}(\varphi^{-1}(u))=  \theta_{b_{2}(s,t,\varphi(x_{0}),\varphi(y_{0}))}(\varphi^{-1}(v))$  for each $(u,st^{-1},v)\in B$.
		By the continuity of $\varphi$ at $x_0$ and $y_0$ and that of $\Psi$ at $(x_{0},mn^{-1},y_{0})$ , we can assume that $\varphi(U_{1})\subseteq U_{2}$, $\varphi(V_{1})\subseteq V_{2}$ and $\Psi(A)\subseteq B$.

		For each $u\in U_{1} $,   there exists $v\in V_1$ such that $ \alpha=(u,mn^{-1},v)\in A $. Then by assumption, we have $ \rho_{a_{1}(m,n,x_{0},y_{0})}(\varphi(u))=\rho_{b_{1}(m,n,x_{0},y_{0})}(\varphi(v)) $ and
		$   (\varphi(u),st^{-1},\varphi(v))\in B$. Thus $\theta_{a_{2}(s,t,\varphi(x_{0}),\varphi(y_{0}))}(u)=  \theta_{b_{2}(s,t,\varphi(x_{0}),\varphi(y_{0}))}( v)$.
		Also since $ \theta_{m}(u)=\theta_{n}(v)$,  it follows from Lemma 3.6 that $$a_{2}(s,t,\varphi(x_{0}),\varphi(y_{0}))b_{2}(s,t,\varphi(x_{0}),\varphi(y_{0}))^{-1}=mn^{-1}.$$
		Thus $b(\varphi(x_{0}),a(x_{0},mn^{-1},y_{0}),\varphi(y_{0}))=mn^{-1} $.
		
		By a similar argument, one can see that the other equation holds.
\end{proof}

	\begin{theorem}
		Let $(X,P,\theta)$ and  $(Y,S,\rho)$ be two essentially free semigroup actions. If
		$(X,P,\theta)\sim_{coe}(Y,S,\rho)$, then
		$\mathcal{G}(X,P,\theta)$ and $\mathcal{G}(Y,S,\rho)$ are  isomorphic as \'{e}tale groupoids.
	\end{theorem}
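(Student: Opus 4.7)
The plan is to show that the maps $\Psi$ and $\widetilde{\Psi}$ constructed in Remark 3.5 furnish the required étale groupoid isomorphism between $\mathcal{G}(X,P,\theta)$ and $\mathcal{G}(Y,S,\rho)$. Most of the substantive work has already been carried out in Lemmas 3.4--3.8; the proof of the theorem is essentially a matter of assembling these pieces and verifying that $\Psi$ is a well-defined groupoid homomorphism whose inverse is $\widetilde{\Psi}$.

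First I would verify that $\Psi$ actually lands in $\mathcal{G}(Y,S,\rho)$. For $\gamma=(x,mn^{-1},y)\in\mathcal{G}(X,P,\theta)$, set $s=a_{1}(m,n,x,y)$ and $t=b_{1}(m,n,x,y)$; then $a(\gamma)=st^{-1}$ and the orbit-equivalence condition (3.1) reads $\rho_{s}(\varphi(x))=\rho_{t}(\varphi(y))$, so indeed $(\varphi(x),st^{-1},\varphi(y))\in\mathcal{G}(Y,S,\rho)$. By symmetry $\widetilde{\Psi}$ maps into $\mathcal{G}(X,P,\theta)$. Continuity of both maps has been established in Remark 3.5.

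Next I would check the algebraic structure. Multiplicativity $\Psi(\gamma_{1}\gamma_{2})=\Psi(\gamma_{1})\Psi(\gamma_{2})$ is exactly the cocycle identity of Lemma 3.7 combined with the fact that $\varphi$ is used uniformly in the first and third coordinates. For units, from $(x,e,x)(x,e,x)=(x,e,x)$ and the cocycle property one gets $a(x,e,x)a(x,e,x)=a(x,e,x)$, forcing $a(x,e,x)=e$ in $H$; thus $\Psi$ sends the unit space $X\cong\mathcal{G}(X,P,\theta)^{(0)}$ into $Y\cong\mathcal{G}(Y,S,\rho)^{(0)}$ via $\varphi$. Preservation of inverses follows similarly: $a(\gamma)a(\gamma^{-1})=a(\gamma\gamma^{-1})=a(r(\gamma),e,r(\gamma))=e$, so $\Psi(\gamma^{-1})=\Psi(\gamma)^{-1}$.

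Bijectivity is delivered by Lemma 3.8: for $\gamma=(x,mn^{-1},y)$ one has
\[
\widetilde{\Psi}(\Psi(\gamma))=\bigl(\varphi^{-1}\varphi(x),\,b(\varphi(x),a(\gamma),\varphi(y)),\,\varphi^{-1}\varphi(y)\bigr)=(x,mn^{-1},y)=\gamma,
\]
and symmetrically $\Psi\circ\widetilde{\Psi}=\mathrm{id}$. Finally, since both $\Psi$ and $\widetilde{\Psi}$ are continuous and mutually inverse, $\Psi$ is a homeomorphism, hence a topological groupoid isomorphism; because both groupoids are étale, this automatically preserves the étale structure. I do not anticipate a genuine obstacle at this stage, since the technical content sits in Lemmas 3.4--3.8; the only point requiring a moment's attention is the unit-preservation argument, which I handle by squaring a unit and invoking the cocycle identity rather than by a direct computation with $a_{1},b_{1}$.
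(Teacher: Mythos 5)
Your proposal is correct and follows essentially the same route as the paper: it takes the maps $\Psi$ and $\widetilde{\Psi}$ from Remark 3.5, uses the cocycle identity of Lemma 3.7 for multiplicativity (and hence preservation of units and inverses), and uses Lemma 3.8 to conclude that the two maps are mutually inverse continuous homomorphisms. The extra details you supply (well-definedness via (3.1), and the unit/inverse checks via squaring and the cocycle identity) are exactly the routine verifications the paper leaves implicit.
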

	
	\begin{proof}
		Let $\varphi$, $a_{1}$, $b_{1}$, $a_{2}$ and $b_{2} $ be as in Definition 3.1. Let $\Psi$ and $\widetilde{\Psi}$ be the continuous maps defined in Remark 3.5. From Lemma 3.7, $\Psi$ and $\widetilde{\Psi}$ are continuous groupoid homomorphisms, and from Lemma 3.8, they are inverse to each other. Hence $\mathcal{G}(X,P,\theta)$ and $\mathcal{G}(Y,S,\rho)$ are  isomorphic as \'{e}tale groupoids.
	\end{proof}	
	
	\begin{proposition}
		Assume that  $X$ and $Y$ are totally disconnected spaces.  If $\mathcal{G}(X,P,\theta)$ and $\mathcal{G}(Y,S,\rho) $ are  isomorphic, then 	$(X,P,\theta)\sim_{coe}(Y,S,\rho)$.
	\end{proposition}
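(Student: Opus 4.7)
The plan is to reverse the construction used in the proof of Theorem 3.9. Start from an étale groupoid isomorphism $\Phi\colon \mathcal{G}(X,P,\theta)\to \mathcal{G}(Y,S,\rho)$. Since $\Phi$ is a groupoid homomorphism it preserves units (characterised by $g=gg^{-1}$), and so its restriction to the unit spaces yields a homeomorphism $\varphi\colon X\to Y$; moreover, for each $(x,g,y)\in \mathcal{G}(X,P,\theta)$ the image $\Phi(x,g,y)$ must have the form $(\varphi(x),h,\varphi(y))$ for some $h\in H$.

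The core step is the local construction of $a_1,b_1$. Fix $(m,n)\in P\times P$ and $(x_0,y_0)\in X_{(m,n)}$, and write $\eta_0=(x_0,mn^{-1},y_0)$. Decompose $\Phi(\eta_0)=(\varphi(x_0),h_0,\varphi(y_0))$ with $h_0=s_0t_0^{-1}$ for some $s_0,t_0\in S$; then $\Phi(\eta_0)$ lies in a basic open bisection $\Sigma(U',s_0,t_0,V')$ of $\mathcal{G}(Y,S,\rho)$. Using continuity of $\Phi$ together with the étale basis of $\mathcal{G}(X,P,\theta)$, I extract a basic open bisection $\Sigma(U,m,n,V)\ni \eta_0$ satisfying $\Phi(\Sigma(U,m,n,V))\subseteq \Sigma(U',s_0,t_0,V')$. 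Total disconnectedness of $X$ lets me shrink $U,V$ to compact open sets while preserving the bisection hypotheses. On the clopen piece $(U\times V)\cap X_{(m,n)}$ I \emph{define} $a_1(m,n,x,y):=s_0$ and $b_1(m,n,x,y):=t_0$; because every $(x,mn^{-1},y)\in \Sigma(U,m,n,V)$ is sent into $\Sigma(U',s_0,t_0,V')$, the defining condition of that bisection is precisely equation (3.1).

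To globalise, observe that $P$ is discrete, so $X_{P,\theta}$ is the topological disjoint union of the compact slices $\{(m,n)\}\times X_{(m,n)}$, reducing continuity of $a_1,b_1$ to continuity on each slice. For fixed $(m,n)$, cover $X_{(m,n)}$ by the clopen pieces produced above; by compactness extract a finite subcover $C_1,\dots,C_k$ with labels $(s_i,t_i)$, then refine via $D_j:=C_j\setminus\bigcup_{i<j}C_i$ into a disjoint clopen partition, each $D_j$ inheriting the label $(s_j,t_j)$. The resulting $a_1,b_1$ are locally constant, hence continuous. Running the same recipe with $\Phi^{-1}$ yields continuous $a_2,b_2\colon Y_{S,\rho}\to P$ satisfying (3.2), and the tuple $(\varphi,a_1,b_1,a_2,b_2)$ witnesses $(X,P,\theta)\sim_{coe}(Y,S,\rho)$. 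The main obstacle is precisely this gluing: because $h_0$ does not uniquely determine the pair $(s_0,t_0)$, two overlapping local patches may record incompatible labels with the same product, so the local recipes do not automatically agree on overlaps. Total disconnectedness is the hypothesis that dissolves this issue, by allowing the cover to be refined to disjoint clopen pieces on which a single unambiguous label can be chosen.
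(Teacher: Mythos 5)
Your proof is correct and follows the same two-stage strategy as the paper's: a local step that traps the image $\Phi(\eta_0)$ of $\eta_0=(x_0,mn^{-1},y_0)$ in a basic bisection $\Sigma(U',s_0,t_0,V')$ of $\mathcal{G}(Y,S,\rho)$ and pulls it back to a basic bisection $\Sigma(U,m,n,V)$ around $\eta_0$, so that the pair $(s_0,t_0)$ serves as a constant local value for $(a_1,b_1)$; then a patching step using total disconnectedness. Where you genuinely differ is in the patching. The paper, for fixed $(m,n)$, partitions $X$ into finitely many disjoint clopen sets $V_i$ and then, using that each fibre $\{x\in X:\theta_m(x)=\theta_n(y_i)\}$ is finite, builds a cover of $X_{(m,n)}$ by pieces $(U_{ij}\times V_i)\cap X_{(m,n)}$ carrying constant labels; you instead cover the compact set $X_{(m,n)}$ directly by clopen pieces $C_i$, extract a finite subcover, and disjointify via $D_j=C_j\setminus\bigcup_{i<j}C_i$. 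Your version is cleaner: it sidesteps the fibre-finiteness argument entirely, and it makes explicit the well-definedness issue (overlapping patches can carry distinct labels $(s,t)$ with the same product $st^{-1}$) that the paper's overlapping $U_{ij}$'s leave implicit. One point you share with the paper and should spell out: continuity of $\Phi$ a priori only yields a basic neighbourhood $\Sigma(U,m',n',V)$ of $\eta_0$ in which the pair $(m',n')$ may differ from $(m,n)$ (one only knows $m'(n')^{-1}=mn^{-1}$), whereas your definition of $a_1,b_1$ on the slice $\{(m,n)\}\times X_{(m,n)}$ needs the prescribed pair. This is repaired by noting that basic bisections with the fixed pair $(m,n)$ form a neighbourhood base at $\eta_0$: the intersection of $\Sigma(U,m,n,V)$ with any open set containing $\eta_0$ equals $\Sigma(U'',m,n,V'')$, where $V''$ is the (open) image under the source map of that intersection and $U''=(\theta_m|_U)^{-1}(\theta_n(V''))$. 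With that remark added, your argument is complete.
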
	
	\begin{proof}
		Assume that $ \Lambda: \mathcal{G}(X,P,\theta)\rightarrow\mathcal{G}(Y,S,\rho) $ is an isomorphism.  Let $\varphi$ be the restriction of $\Lambda$ to $X$, and let $a(\gamma)=c_{\rho}\Lambda(\gamma)$ and $b(\eta)=c_{\theta}\Lambda^{-1}(\eta)$, where $c_{\theta}$ and $c_{\rho}$ are the canonical cocycles on $\mathcal{G}(X,P,\theta)$ and $\mathcal{G}(Y,S,\rho)$, respectively. Then $\varphi$ is a homeomorphism from $X$ onto $Y$, $a, b$ are continuous cocycles on their respective domains. Moreover, $\Lambda(x,g,y)=(\varphi(x),a(x,g,y),\varphi(y))$ and $\Lambda^{-1}(u,h,v)=(\varphi^{-1}(u),b(u,h,v),\varphi^{-1}(v))$.
		Let $(m,n)\in P\times P$ be arbitrary.
		
		For $(x,y)\in X_{(m,n)}$, let $\gamma=(x,mn^{-1},y)\in \mathcal{G}(X,P,\theta)$. Since $a$ is continuous, there is an open bisection $A=\Sigma(U,m,n,V)$ such that $\gamma\in A$ and $a(\gamma)=a(\gamma')$ for each $\gamma'\in A$. Let $\alpha_{A}$ be the canonical homeomorphism given  by $A$, i.e., $\alpha_{A}(z)=(\theta_{m}|_{U})^{-1}(\theta_{n}(z))$, $z\in V$. Then
		$A=\{ (\alpha_{A}(z),mn^{-1},z) \;|\,z\in V\}$, thus $\Lambda(A)=\{ (\varphi(\alpha_{A}(z)),a(\gamma),\varphi(z))\;|\;z\in V\}$ is an open bisection. Choose an bisection of the form $\widetilde{B}=\Sigma(W,s,t,T)$ such that $\Lambda(\gamma)\in \widetilde{B}$ and $\widetilde{B}\subseteq \Lambda(A)$, and let $B=\Lambda^{-1}(\widetilde{B})\subseteq A$. Then $a(\gamma)=st^{-1}$ for $s,t\in S$ and there exists an open subset $V'\subseteq V$ such that $y\in V'$ and $B=\{ (\alpha_{A}(z),mn^{-1},z) \;|\,z\in V'\}=\Sigma(U',m,n,V')$, where $U'=\alpha_A(V')$.
		Thus $\rho_{s}(\varphi(\alpha_{A}(z)))=\rho_{t}(\varphi(z))$ for all $z\in V'$, so  $\rho_{s}(\varphi(u))=\rho_{t}(\varphi(v))$ for $(u,v)\in (U'\times V')\cap X_{(m,n)}$.
		
		Above all, we conclude that, for each $(x,y)\in X_{(m,n)}$, there exist $s,t\in S$ and open neighbourhoods $U_{x}=U'$ of $x$ and  $V_{y}=V'$ of $y$ such that $a(x,mn^{-1},y)=st^{-1}$, $\theta_{m}(U_{x})=\theta_{n}(V_{y})$, $\theta_{m}|_{U_{x} },\theta_{n}|_{ V_{y}} $  are injective, and  $  \rho_{s}(\varphi(u))$ $=\rho_{t}(\varphi(v))$ for each $(u,v)\in  (U_{x}\times V_{y})\cap X_{(m,n)}$.
		
		For each $y\in X$, since $X$ is compact and $\theta_{m},\theta_{n}$ are surjective local homeomorphisms, there exist finite elements in $X$, denoted by $x_{1},x_{2},\cdots,x_{k}$, such that $(x_{i},y)\in X_{(m,n)}$ for $i=1,2,\cdots,k$. From the above all, for each $i$ with $1\leq i\leq k$, there exist two elements, denoted by $\widetilde{a}_1(m,n,x_{i},y)$ and $\widetilde{b}_1(m,n,x_{i},y)$, in $S$ and open subsets $V_{y}\ni y$ and $U_{x_{i}}\ni x_{i}$ such that
		$a(x_{i},mn^{-1},y)=\widetilde{a}_1(m,n,x_{i},y) \widetilde{b}_1(m,n,x_{i},y)^{-1}$, $\theta_{m}(U_{x_{i}})=\theta_{n}(V_{y})$, $\theta_{m}|_{U_{x_{i}} }$ and $\theta_{n}|_{ V_{y}} $  are injective, and
		$\rho_{\widetilde{a}_1(m,n,x_{i},y)}(\varphi(u)) =\rho_{\widetilde{b}_1(m,n,x_{i},y)}(\varphi(v))$ for $(u,v)\in (U_{x_{i}}\times V_{y})\cap X_{(m,n)}$.
		Due to $X$ is  totally disconnected, the $V_{y}$ above can be assumed to be a clopen subset of $X$. From the compactness of $X$, there exist $y_{i},\,x_{ij}\in X$ and clopen subsets $V_i$ and $U_{ij}$ of $X$ for $i=1,2,\cdots,l$, $j=1,2,\cdots, k_i$ satisfying the following conditions:
		\begin{enumerate}
			\item[(i)] $X= \mathop{\bigcup}\limits_{i=1}^{l}V_{i}$ is the disjoint union of $ V_{i}'s$, and $y_i\in V_i, x_{ij}\in U_{ij}$ for $i=1,2,\cdots,l$, $j=1,2,\cdots, k_i$;
			
			\item[(ii)]   $\theta_{m}(x_{ij})=\theta_{n}(y_{i})$,  $\theta_{m}(U_{ij})=\theta_{n}(V_i)$, $\theta_{m}|_{U_{ij}},\theta_{n}|_{ V_{i}} $  are injective for $i=1,2,\cdots,l$, $j=1,2,\cdots,k_{i}$;
			
			\item[(iii)] $  \mathop{\bigcup}\limits_{i=1}^{l}\mathop{\bigcup}\limits_{j=1}^{k_{i}}((U_{ij}\times V_{i})\cap X_{(m,n)})=X_{(m,n)} $;
			\item[(iv)] there exist $  \widetilde{a}_1(m,n,x_{ij},y_{i}), \widetilde{b}_1(m,n,x_{ij},y_{i})\in S$  with $a(x_{ij},mn^{-1},y_i)= \widetilde{a}_1(m,n,x_{ij},y_{i})\widetilde{b}_1(m,n,x_{ij},y_{i})^{-1}$ and $$ \rho_{ \widetilde{a}_1(m,n,x_{ij},y_{i})}(\varphi(u)) =\rho_{ \widetilde{b}_1(m,n,x_{ij},y_{i})}(\varphi(v))$$   for $(u,v)\in (U_{ij}\times V_{i})\cap X_{(m,n)}$, $i=1,2,\cdots,l$, $j=1,2,\cdots,k_{i}$.
		\end{enumerate}	
		
		We define two maps 	$a_{1}$ and $b_{1}$ from $X_{P,\theta}= \cup_{(m,n)\in P\times P} ( \{(m,n)\} \times X_{(m,n)} )$ into $S$   by $a_{1}(m,n,u,v)= \widetilde{a}_1(m,n,x_{ij},y_{i})$ and $b_{1}(m,n,u,v)= \widetilde{b}_1(m,n,x_{ij},y_{i})$ for $ (  u,v)\in    (U_{ij}\times V_{i})\cap X_{(m,n)}  $. Then $a_{1} ,b_{1}$ are continuous mappings satisfying the equation $(3.1)$.
		Similarly, we can construct continuous maps  $a_{2} ,b_{2}$ satisfying (3.2). Thus $(X,P,\theta)\sim_{coe}(Y,S,\rho)$.
\end{proof}
	
	Recall that an \'{e}tale groupoid $\mathcal{G}$ is \emph{topologically principal} if $\{u\in \mathcal{G}^{(0)}:\,\, \mathcal{G}^u_u=\{u\}\}$ is dense in $\mathcal{G}^{(0)}$, where $\mathcal{G}_{u}^{u}=\{\gamma\in \mathcal{G},r(\gamma)=d(\gamma)=u\} $.
	From \cite{BNRSW,Re2}, if $\mathcal{G}$ is topologically principal, then $C_0(\mathcal{G}^{(0)})$ is a Cartan subalgebra of $C_r^*(\mathcal{G})$. Furthermore,  two topologically principal \'{e}tale groupoids $\mathcal{G}$ and $\mathcal{H}$ are isomorphic if and only if there exists a $C^*$-isomorphism $\Phi$ from $C^*_r(\mathcal{G})$ onto $C^*_r(\mathcal{H})$ such that $\Phi(C_0(\mathcal{G}^{(0)}))=C_0(\mathcal{H}^{(0)})$.
	From \cite[Proposition 7.5]{BCFS},  a semigroup   action  $(X,P,\theta)$ is essentially free  if and only if $\mathcal{G}(X,P,\theta)$   is topologically principal.  By Theorem 3.9 and Proposition 3.10, we have the following   result.
	
	\begin{corollary}
		Assume that $X$ and $Y$ are totally disconnected and that $(X,P,\theta)$ and  $(Y,S,\rho)$ are essentially free. Then following statements are equivalent:
		\begin{enumerate}
			\item[(i )] $ (X,G, \theta )\sim_{coe} (Y,H, \rho )$;
			\item[(ii )]  $\mathcal{G}(X,P,\theta)$ and $\mathcal{G}(Y,S,\rho)$ are  isomorphic as \'{e}tale groupoids;
			\item[(iii)] there is a $C^*$-isomorphism $\Phi$ from $C_r^*(\mathcal{G}(X,P,\theta))$   onto $C_r^*(\mathcal{G}(Y,S,\rho))$ such that $\Phi(C(X))=C(Y)$.
		\end{enumerate}

	\end{corollary}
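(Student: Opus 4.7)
The plan is to assemble Corollary 3.11 as a direct combination of Theorem 3.9, Proposition 3.10, and the two groupoid/$C^{*}$-algebraic facts recalled in the paragraph preceding the corollary. Specifically, I would treat the equivalence (i)$\Leftrightarrow$(ii) as an immediate consequence of the work already done in Section 3, and reduce (ii)$\Leftrightarrow$(iii) to a known reconstruction theorem for topologically principal étale groupoids once the essential-freeness hypothesis has been translated to topological principality of the transformation groupoids.

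For the implication (i)$\Rightarrow$(ii), I would simply invoke Theorem 3.9: since $(X,P,\theta)$ and $(Y,S,\rho)$ are assumed essentially free and continuously orbit equivalent, the theorem yields an isomorphism of the associated étale groupoids. For (ii)$\Rightarrow$(i), I would apply Proposition 3.10, which is precisely the converse under the extra standing assumption that $X$ and $Y$ are totally disconnected; this hypothesis is part of the corollary, so Proposition 3.10 applies verbatim.

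For the equivalence (ii)$\Leftrightarrow$(iii), the first step is to use \cite[Proposition 7.5]{BCFS} (cited immediately above the corollary) to pass from essential freeness of the semigroup actions to topological principality of $\mathcal{G}(X,P,\theta)$ and $\mathcal{G}(Y,S,\rho)$. Once both groupoids are known to be topologically principal, the reconstruction result from \cite{BNRSW,Re2} says that two such étale groupoids are isomorphic if and only if there is a $*$-isomorphism of their reduced $C^{*}$-algebras carrying one canonical Cartan subalgebra onto the other. Under the standard identification of the unit space $\mathcal{G}(X,P,\theta)^{(0)}$ with $X$ (used throughout Section 3), the Cartan subalgebra $C_{0}(\mathcal{G}(X,P,\theta)^{(0)})$ is exactly $C(X)$, and similarly for $Y$; so this reconstruction theorem gives (ii)$\Leftrightarrow$(iii) directly.

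There is no real technical obstacle here: the corollary is essentially a packaging result, and the only point requiring a moment of care is checking that the hypotheses of the reconstruction theorem hold, namely topological principality of both groupoids, which is exactly what \cite[Proposition 7.5]{BCFS} delivers from essential freeness. With these pieces in place, chaining (i)$\Leftrightarrow$(ii) with (ii)$\Leftrightarrow$(iii) completes the proof.
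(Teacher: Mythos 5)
Your proposal is correct and matches the paper's own argument exactly: the authors likewise obtain (i)$\Leftrightarrow$(ii) by citing Theorem 3.9 and Proposition 3.10, and (ii)$\Leftrightarrow$(iii) by combining \cite[Proposition 7.5]{BCFS} (essential freeness equals topological principality of the transformation groupoid) with the Cartan-subalgebra reconstruction theorem of \cite{BNRSW,Re2}, identifying $C_0(\mathcal{G}^{(0)})$ with $C(X)$. No differences worth noting.
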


	\begin{example}
		Let $\mathbb{N}_{0}$ be the additive semigroup of all non-negative integers. For a finite set $A$, consider the set $A^{\mathbb{N}_0}$ consisting of all maps from $\mathbb{N}_0$ to $A$. Equipped each factor $A$ of $A^{\mathbb{N}_0}$ with the discrete topology and $A^{\mathbb{N}_0}$ with the associated   product topology, $A^{N_0}$ is compact and totally disconnected space. Let $\sigma:\, A^{\mathbb{N}_{0}}\rightarrow A^{\mathbb{N}_{0}}$ be the shift transformation defined by $$\sigma(x)(i)=x(i+1) \mbox{ for $x\in A^{\mathbb{N}_0}$ and $i\in \mathbb{N}_{0}$}.$$
		
		A one-sided shift space is a closed, and hence compact, subset $X$ of $A^{\mathbb{N}_{0}}$ such that $X$ is invariant by the shift transformation $\sigma$, i.e., $\sigma(X)=X$.  In this case, let $\sigma_{X}$ denote the restriction of $\sigma$ to $X$. The shift map $\sigma_X$ is a local homeomorphism if and only if $X$ is a shift of finite type, in which case $\sigma_X^n$ is a local homeomorphism for all $n\in \mathbb{N}_0$ (\cite[2.2]{CEOR}), thus we have a semigroup action $(X, \mathbb{N}_0, \sigma_X)$ in a natural way.
		
		Following \cite{Ma1}, the authors in \cite{CEOR} introduced the notion of continuous orbit equivalence for one-sided shift spaces, in which they call two one-sided shift spaces $X$ and $Y$  are continuously orbit equivalent if there exist  a homeomorphism $\varphi:X\rightarrow Y$ and continuous maps $k,l: X\rightarrow \mathbb{N}_{0}$, $k',l': Y\rightarrow \mathbb{N}_{0}$ such that $\sigma_{Y}^{k(x)}(\varphi(\sigma_{X}(x)))=\sigma_{Y}^{l(x)}(\varphi( x ))$ and $\sigma_{X}^{k'(y)}(\varphi^{-1}(\sigma_{Y}(y)))=\sigma_{X}^{l'(y)}(\varphi^{-1}( y ))$ for $x\in X$ and $y\in Y$. Moreover, they also proved that two one-sided shift spaces of finite type, $X$ and $Y$, are continuously orbit equivalent if and only if their associated groupoids $\mathcal{G}_X$ and $\mathcal{G}_Y$ are isomorphic. The following proposition shows that the notion of continuous orbit equivalence in \cite{CEOR} and that of semigroup actions for one-sided shift spaces of finite type are consistent.
\end{example}

	\begin{proposition}
		Two one-sided shift spaces of finite type $X$ and $Y$  are continuously orbit equivalent	if and only if semigroup actions $(X,\mathbb{N}_{0},\sigma_{X})$ and  $(Y,\mathbb{N}_{0},\sigma_{Y})$ are continuously orbit equivalent.
	\end{proposition}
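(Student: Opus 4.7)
My plan is to verify this equivalence by unpacking both definitions and using an iteration argument to bridge the gap between the ``single-step'' data $(k,l,k',l')$ of Carlsen--Eilers--Ortega--Restorff and the ``arbitrary-pair'' data $(a_1,b_1,a_2,b_2)$ of Definition 3.1. The proof should require neither essential freeness nor the compactness of $X,Y$, and should work purely at the level of continuous functions into the discrete semigroup $\mathbb{N}_0$.

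For the forward direction, I would start from the \cite{CEOR} data and iteratively build higher-order versions. Define $k_n, l_n : X \to \mathbb{N}_0$ by $k_0 = l_0 = 0$ and the recursion $k_{n+1}(x) = k_n(\sigma_X(x)) + k(x)$, $l_{n+1}(x) = l_n(\sigma_X(x)) + l(x)$. A routine induction using the defining equation $\sigma_Y^{k(x)}(\varphi(\sigma_X(x))) = \sigma_Y^{l(x)}(\varphi(x))$ and the commutativity of powers of $\sigma_Y$ shows that
\[\sigma_Y^{k_n(x)}(\varphi(\sigma_X^n(x))) = \sigma_Y^{l_n(x)}(\varphi(x)) \quad \text{for all } x \in X, n \in \mathbb{N}_0,\]
and each $k_n, l_n$ is continuous since $\sigma_X$, $k$, $l$ are. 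Now for $(x,y) \in X_{(m,n)}$ we have $\sigma_X^m(x) = \sigma_X^n(y)$, so applying $\sigma_Y^{k_n(y)}$ to the $m$-th iterated equation and $\sigma_Y^{k_m(x)}$ to the $n$-th and equating yields
\[\sigma_Y^{l_m(x)+k_n(y)}(\varphi(x)) = \sigma_Y^{l_n(y)+k_m(x)}(\varphi(y)).\]
Setting $a_1(m,n,x,y) := l_m(x) + k_n(y)$ and $b_1(m,n,x,y) := l_n(y) + k_m(x)$ produces continuous maps on $X_{\mathbb{N}_0,\sigma_X}$ (since $\mathbb{N}_0$ is discrete, continuity on the disjoint union $\bigcup_{(m,n)} \{(m,n)\} \times X_{(m,n)}$ reduces to continuity on each slice). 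The construction of $a_2, b_2$ from $k', l'$ is symmetric.

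For the converse, given $(\varphi, a_1, b_1, a_2, b_2)$ as in Definition 3.1, I would specialize to the single pair $(m,n)=(0,1)$. Observe that $(\sigma_X(x), x) \in X_{(0,1)}$ for every $x \in X$ since $\sigma_X^0(\sigma_X(x)) = \sigma_X(x) = \sigma_X^1(x)$, so the map $x \mapsto (0, 1, \sigma_X(x), x)$ is a continuous section $X \to \{(0,1)\} \times X_{(0,1)}$. Define $k(x) := a_1(0, 1, \sigma_X(x), x)$ and $l(x) := b_1(0, 1, \sigma_X(x), x)$; these are continuous by composition, and equation (3.1) instantly becomes $\sigma_Y^{k(x)}(\varphi(\sigma_X(x))) = \sigma_Y^{l(x)}(\varphi(x))$. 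The maps $k', l'$ come analogously from $a_2, b_2$.

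The proof is essentially bookkeeping, so there is no serious obstacle; the mild point requiring care is the topology on $X_{P,\theta}$. Because $\mathbb{N}_0 \times \mathbb{N}_0$ carries the discrete topology, the target space is a topological disjoint union over pairs $(m,n)$, which is why the inductive continuity of $k_n, l_n$ in $x$ alone suffices for the continuity of $a_1, b_1$ on $X_{\mathbb{N}_0, \sigma_X}$, and likewise why the specialization argument in the converse is legitimate without any uniformity in $(m,n)$.
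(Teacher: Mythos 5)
Your proposal is correct and follows essentially the same route as the paper: your recursively defined $k_n,l_n$ are exactly the paper's cocycle sums $k^n(x)=\sum_{i=0}^{n-1}k(\sigma_X^i(x))$ (the paper cites Matsumoto's Lemma 5.1 where you do the induction), the maps $a_1=l^m(x)+k^n(y)$, $b_1=k^m(x)+l^n(y)$ coincide with yours, and your converse specialization at $(m,n)=(0,1)$ with the pair $(\sigma_X(x),x)$ is the mirror image of the paper's choice $(1,0)$ with $(x,\sigma_X(x))$.
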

	\begin{proof}
		Assume that $X$ and $Y$  are continuously orbit equivalent via a homeomorphism $\varphi$ and continuous maps $k,l,k',l'$ as above. For $x\in X$,
		define  $k^{n}(x)= \sum\limits_{i=0}^{n-1}k(\sigma_{X}^{i}(x))$ and $l^{n}(x)=\sum\limits_{i=0}^{n-1}l(\sigma_{X}^{i}(x))$ for $n\geq 1$, and $k^0(x)=l^0(x)=0$.
		Then $$\sigma_{Y}^{k^{n}(x)}(\varphi(\sigma^{n}_{X}(x)))=\sigma_{Y}^{l^{n}(x)}(\varphi( x ))$$ for $x\in X$ and $n\geq 0$ (\cite[Lemma 5.1]{Ma1}). Thus, for $(m,n)\in \mathbb{N}_0\times \mathbb{N}_0$ and $\sigma_X^m(x)=\sigma_X^n(y)$, one can see that
		$\sigma_{Y}^{k^{m}(x)+l^{n}(y)}(\varphi(y))=\sigma_{Y}^{k^{n}(y)+l^{m}(x)}(\varphi( x ))$.

		Define $a_{1}(m,n,x,y)=l^{m}(x)+k^{n}(y)$, $b_{1}(m,n,x,y)=k^{m}(x)+l^{n}(y)$ for $(m,n,x,y)\in \cup_ {(m,n)\in \mathbb{N}_0\times \mathbb{N}_0}( \{(m,n)\}  \times X_{(m,n)} )$. Then $a_1$ and $b_1$ are continuous and satisfy  that $\sigma_{Y}^{a_{1}(m,n,x,y) }(\varphi(x))=\sigma_{Y}^{b_{1}(m,n,x,y)}(\varphi( y ))$.
		By a similar argument, we can construct  continuous maps  $a_{2}$ and $b_{2}$ on $  \cup_ {(s,t)\in \mathbb{N}_0\times \mathbb{N}_0}( \{(s,t)\}  \times Y_{(s,t)} ) $  satisfying that $\sigma_{X}^{a_{2}(s,t,x,y) }(\varphi^{-1}(x))=\sigma_{X}^{b_{2}(s,t,x,y)}(\varphi^{-1}( y ))$. Hence $(X,\mathbb{N}_{0},\sigma_{X})$ and  $(Y,\mathbb{N}_{0},\sigma_{Y})$ are continuously orbit equivalent.

		Assume that $(X,\mathbb{N}_{0},\sigma_{X})$ and  $(Y,\mathbb{N}_{0},\sigma_{Y})$ are continuously orbit equivalent and $\varphi ,a_{1} ,b_{1} ,a_{2} ,b_{2} $ satisfy Definition 3.1.
		Let $k(x)=b_{1}(1,0,x,\sigma_{X}(x) )$ and $l(x)=a_{1}(1,0,x,\sigma_{X}(x) )$ for $x\in X$,   $k'(y)=b_{2}(1,0,y,\sigma_{Y}(y) )$ and $l'(y)=a_{2}(1,0,y,\sigma_{Y}(y) )$ for $y\in Y$.
		Then $k,l: X\rightarrow \mathbb{N}_{0}$ and $k',l': Y\rightarrow \mathbb{N}_{0}$ are continuous maps such that $\sigma_{Y}^{k(x)}(\varphi(\sigma_{X}(x)))=\sigma_{Y}^{l(x)}(\varphi( x ))$ and $\sigma_{X}^{k'(y)}(\varphi^{-1}(\sigma_{Y}(y)))=\sigma_{X}^{l'(y)}(\varphi^{-1}( y ))$  for $x\in X$ and $y\in Y$.  Therefore  $X$ and $Y$  are continuously orbit equivalent.
	\end{proof}

	\section{  semigroup actions by homeomorphisms }

	Let  $(X,P,\theta)$ be a semigroup action and $G$ a countable group containing $P$ as in Section 2. In this section, we further assume that each map $\theta_m$  is a homeomorphism, in other words, $(X,P,\theta)$ is a semigroup action by homeomorphisms. Under this situation, we can construct a group action  $(X,G,\widetilde{\theta})$ and discuss the relationship between   continuous orbit  equivalence of semigroup actions and that of group actions.
	
	For  each $g\in G$, it follows from the assumption that there exist $m,n\in P$ such that $g=mn^{-1}$. Define $$\tilde{\theta}_{g}(x)= \theta_{n}^{-1}(\theta_{m}(x))\; \text{for}  \;x\in X.$$
	To see that $\widetilde{\theta}_g$ is well-defined, if  $g=m_{1}n_{1}^{-1}=m_{2}n_{2}^{-1}$ for $m_i,n_i \in P$ and  $i=1,2$, then we can choose $p,q \in P$ such that $m_{2}^{-1}m_{1}(=n_2^{-1}n_1)=pq^{-1}$. Thus $m_{2}p=m_{1}q$ and $n_{2}p=n_{1}q$. For $x\in X$, we have
	$$\theta_{n_{2}p}(\theta_{n_{1}}^{-1}(\theta_{m_{1}}(x)))=\theta_{n_{1}q}(\theta_{n_{1}}^{-1}(\theta_{m_{1}}(x)))=\theta_{m_{1}q}  (x)=\theta_{m_{2}p}(x)=\theta_{n_{2}p}(\theta_{n_{2}}^{-1}(\theta_{m_{2}}(x))).$$
	Since $\theta_{n_2p}$ is a homeomorphism, we have $\theta_{n_{1}}^{-1}(\theta_{m_{1}}(x))=\theta_{n_{2}}^{-1}(\theta_{m_{2}}(x))$. Hence $\theta_{n_{1}}^{-1}\theta_{m_{1}}  =\theta_{n_{2}}^{-1}\theta_{m_{2}}$.
	
	Clearly, $\tilde{\theta}_{m} =\theta_{m}$ and $ \tilde{\theta}_{m^{-1}} =\theta_{m}^{-1}  $ for each $ m\in P $. From the above, for $x,y \in X$ and $m_{i},n_{i} \in P$, $i=1,2$, if $m_{1}n_{1}^{-1}=m_{2}n_{2}^{-1}$, then   $\theta_{m_{1}}(x)=\theta_{n_{1}}(y)$ if and only if   $\theta_{m_{2}}(x)=\theta_{n_{2}}(y)$.

	\begin{lemma}
		The map $\tilde{\theta}:\, g\rightarrow \widetilde{\theta}_g$ is a (right) group action of $G $ on $X$.
	\end{lemma}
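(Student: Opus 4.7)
The plan is to verify the two defining properties of a right group action: $\tilde\theta_e=\mathrm{id}_X$ and $\tilde\theta_g\tilde\theta_h=\tilde\theta_{hg}$ for all $g,h\in G$. The identity $\tilde\theta_e=\mathrm{id}_X$ is immediate: since $e=ee^{-1}$ with $e\in P$, we have $\tilde\theta_e=\theta_e^{-1}\theta_e=\mathrm{id}_X$. So the content of the lemma is the composition law, and the well-definedness paragraph that precedes the statement (together with the fact that each $\theta_m$ is a homeomorphism) is exactly what I plan to exploit.

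For the composition, I would write $g=m_1n_1^{-1}$ and $h=m_2n_2^{-1}$ with $m_i,n_i\in P$. Using $G=P^{-1}P$, pick $p,q\in P$ with $n_2^{-1}m_1=pq^{-1}$, so that $m_1q=n_2p$ in $P$ and $hg=m_2n_2^{-1}m_1n_1^{-1}=m_2pq^{-1}n_1^{-1}=(m_2p)(n_1q)^{-1}$. Then by definition
\[
\tilde\theta_{hg}=\theta_{n_1q}^{-1}\theta_{m_2p}=(\theta_q\theta_{n_1})^{-1}(\theta_p\theta_{m_2})=\theta_{n_1}^{-1}\theta_q^{-1}\theta_p\theta_{m_2},
\]
where I have used the convention $\theta_n\theta_m=\theta_{mn}$. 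On the other hand,
\[
\tilde\theta_g\tilde\theta_h=\theta_{n_1}^{-1}\theta_{m_1}\theta_{n_2}^{-1}\theta_{m_2}.
\]
Thus it suffices to establish the middle identity $\theta_{m_1}\theta_{n_2}^{-1}=\theta_q^{-1}\theta_p$, i.e.\ $\theta_q\theta_{m_1}=\theta_p\theta_{n_2}$, which is precisely $\theta_{m_1q}=\theta_{n_2p}$. This last equality holds as an equality in $\mathrm{End}(X)$ because $m_1q=n_2p$ in $P$; inverting $\theta_q$ and $\theta_{n_2}$ is legitimate because we are in the semigroup-action-by-homeomorphisms setting.

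The only obstacle is bookkeeping: one must keep the convention $\theta_n\theta_m=\theta_{mn}$ straight and be careful that every inversion used is of a map $\theta_k$ with $k\in P$ (so that it is a genuine homeomorphism). A conceptually cleaner alternative I would mention in passing is that well-definedness, already proved in the paragraph preceding the lemma, shows $\tilde\theta_g$ depends only on $g$; hence the composition identity for $\tilde\theta$ reduces, after choosing any convenient factorization $hg=(m_2p)(n_1q)^{-1}$, to a purely semigroup-theoretic identity in $P$, namely $m_1q=n_2p$, which is built into the choice of $p,q$.
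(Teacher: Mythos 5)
Your proof is correct and takes essentially the same route as the paper's: both factor the middle product of the composite group element (your $n_2^{-1}m_1=pq^{-1}$ is the paper's $b^{-1}c=mn^{-1}$), rewrite the product as a single element of $PP^{-1}$, and reduce the action identity to a semigroup equality ($m_1q=n_2p$) via the convention $\theta_n\theta_m=\theta_{mn}$ and the fact that each $\theta_k$ is invertible. One cosmetic slip: writing $n_2^{-1}m_1=pq^{-1}$ invokes $G=PP^{-1}$ rather than $G=P^{-1}P$, but both hold by the standing hypothesis, so nothing is lost.
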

	\begin{proof}
		Given $g,h\in G$, we let $g=ab^{-1}$ and $h=cd^{-1}$ for $a,b,c,d\in P$. Choose $m,n\in P$ such that $b^{-1}c=mn^{-1}$. Thus $ \theta_{n}^{-1}\theta_{m}=\theta_{c}\theta_{b}^{-1}$. Consequently, for each $x\in X$, we have
		$$\begin{aligned}
			\tilde{\theta}_{gh}(x)&=\tilde{\theta}_{a(b^{-1}c)d^{-1}}(x) =\tilde{\theta}_{amn^{-1}d^{-1}}(x)\\& =\theta_{dn}^{-1}\theta_{am}(x)  =\theta_{d}^{-1} \theta_{n}^{-1}\theta_{ m}\theta_{a } (x)\\&=\theta_{d}^{-1}\theta_{c}\theta_{b}^{-1}\theta_{a}(x) =\tilde{\theta}_{h}\tilde{\theta}_{g}(x).
		\end{aligned}$$	
		Thus	$ \tilde{\theta}$ is a right action of   $ G $ on  $X$.
\end{proof}

	The transformation groupoid  $X\rtimes_{\widetilde{\theta}} G$ associated to the above group action $(X,G,\widetilde{\theta})$   is given by the set $X\times G$ with the product topology,  multiplication $(x,g)(y,h)=(x,gh)$ if $y=\widetilde{\theta}_g(x)$, and inverse $(x,g)^{-1}=(\widetilde{\theta}_g(x),g^{-1})$. This groupoid is \'{e}tale and its unit space equals $X$ by identifying  $(x,e)$    with $x$.
	It is well-known that the reduced groupoid $C^*$-algebra  $C_{r}^{*}(X\rtimes_{\widetilde{\theta}} G)$ is isomorphic to the reduced crossed product $C^{*}$-algebra $C(X)\rtimes_{\widetilde{\theta},r}G$.
	From \cite{CRST}, when $G=\mathbb{Z}$,   the associated groupoid $\mathcal{G}(X,\mathbb{Z},\theta)$ of   Deaconu-Renault system $(X, \mathbb{Z},\theta)$ is isomorphic to the transformation groupoid $X\rtimes_{\widetilde{\theta}}\mathbb{Z}$, which induces an isomorphism $\Phi:\,C^{*}_r(\mathcal{G}(X,\mathbb{Z},\theta))\rightarrow C(X)\rtimes_{\widetilde{\theta},r}\mathbb{Z}$.  Similarly, we have the following result.

	\begin{proposition} 	The map
		$\Lambda:\;(x,g,y) \in \mathcal{G}(X,P,\theta)\mapsto (x,g)\in X\rtimes_{\widetilde{\theta}}  G$  is an  \'{e}tale groupoid isomorphism. Moreover, it induces  a $C^*$-isomorphism $ \Phi$ from $C_{r}^{*}(\mathcal{G}(X,P,\theta) )$ onto $ C(X)\rtimes_{\tilde{\theta},r}G$ such that $\Phi(C(X))= C(X)$.
	\end{proposition}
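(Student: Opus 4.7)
The plan is to verify that $\Lambda$ is a bijective groupoid homomorphism and a homeomorphism, then invoke the standard identification of transformation-groupoid $C^*$-algebras with crossed products. First I would check that $\Lambda$ is well-defined and bijective. For $(x,g,y) \in \mathcal{G}(X,P,\theta)$ with $g = mn^{-1}$, the condition $\theta_m(x) = \theta_n(y)$ forces $y = \theta_n^{-1}\theta_m(x) = \widetilde{\theta}_g(x)$, which is exactly the relation $y = \widetilde{\theta}_g(x)$ needed for $(x,g)$ to determine a morphism in $X\rtimes_{\widetilde{\theta}} G$; by the observation preceding Lemma 4.1 the value $\widetilde{\theta}_g(x)$ is independent of the factorization. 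The inverse map sends $(x,g) \mapsto (x,g,\widetilde{\theta}_g(x))$, which lies in $\mathcal{G}(X,P,\theta)$ because $G = PP^{-1}$ allows one to write $g = mn^{-1}$, whereupon $\theta_n(\widetilde{\theta}_g(x)) = \theta_m(x)$. Preservation of multiplication and inverse is a direct computation: $(x,g,y)(y,h,z) \mapsto (x,gh)$ matches $(x,g)(y,h) = (x,gh)$, and $(x,g,y)^{-1} = (y,g^{-1},x) \mapsto (y,g^{-1}) = (x,g)^{-1}$.

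Next I would verify that $\Lambda$ is a homeomorphism by comparing the basis $\{\Sigma(U,m,n,V)\}$ of $\mathcal{G}(X,P,\theta)$ with the product basis of $X\rtimes_{\widetilde{\theta}} G$. The image $\Lambda(\Sigma(U,m,n,V))$ equals $(U\cap \widetilde{\theta}_{mn^{-1}}^{-1}(V)) \times \{mn^{-1}\}$, which is open, so $\Lambda$ is open. For continuity in the other direction, given a basic open set $U\times\{g\}$ with $g = mn^{-1}$ fixed, for each $(x_0,g,y_0) \in \Lambda^{-1}(U\times\{g\})$ one can choose open neighbourhoods $U' \subseteq U$ of $x_0$ and $V'$ of $y_0 = \widetilde{\theta}_g(x_0)$ on which $\theta_m$ and $\theta_n$ restrict to homeomorphisms satisfying $\theta_m(U') = \theta_n(V')$; then $\Sigma(U',m,n,V') \subseteq \Lambda^{-1}(U\times\{g\})$ and contains $(x_0,g,y_0)$, so the preimage is open.

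Finally, $\Lambda$ being a topological groupoid isomorphism between étale groupoids induces a $*$-isomorphism $\Lambda_*: C_{r}^{*}(\mathcal{G}(X,P,\theta)) \to C_{r}^{*}(X\rtimes_{\widetilde{\theta}} G)$. Under the canonical identifications of the unit spaces with $X$ (via $(x,e,x)\leftrightarrow x$ and $(x,e)\leftrightarrow x$), the restriction of $\Lambda$ to the unit space is the identity, so $\Lambda_*$ maps $C(X) = C_0(\mathcal{G}(X,P,\theta)^{(0)})$ onto $C_0((X\rtimes_{\widetilde{\theta}} G)^{(0)}) = C(X)$. Composing with the standard isomorphism $C_{r}^{*}(X\rtimes_{\widetilde{\theta}} G) \cong C(X)\rtimes_{\widetilde{\theta},r} G$, which is the identity on the subalgebra $C(X)$, yields the desired $\Phi$ with $\Phi(C(X)) = C(X)$.

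The main obstacle is verifying continuity of $\Lambda^{-1}$ with respect to the specific basis $\Sigma(U,m,n,V)$ rather than a naive subspace topology on $\mathcal{G}(X,P,\theta) \subseteq X\times G\times X$; this is precisely where one must invoke the local-homeomorphism property of the $\theta_m$ to produce the neighbourhoods $U',V'$ on which $\theta_m|_{U'}$ and $\theta_n|_{V'}$ are homeomorphisms with coinciding image. The remaining steps---the algebraic groupoid structure, the passage to reduced $C^*$-algebras via the functoriality of $C_r^*(\cdot)$, and the standard identification with the crossed product---are routine.
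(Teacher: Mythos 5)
Your proof is correct and follows essentially the same route as the paper: an explicit inverse $(x,g)\mapsto(x,g,\widetilde{\theta}_g(x))$ for the algebraic part, and the bisections $\Sigma(U,m,n,V)$ with $V=\theta_n^{-1}(\theta_m(U))$ matched against the product-basis sets $U\times\{g\}$ for the topological part. The paper simply declares the $C^*$-algebraic consequence standard and omits it, whereas you spell out the functoriality and the unit-space identification; that extra detail is consistent with the paper's intent.
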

	\begin{proof}
		We only prove that $\Lambda$ is an \'{e}tale groupoid isomorphism. One can see that $\Lambda$ is an algebraic  (groupoid) isomorphism from $\mathcal{G}(X,P,\theta)$ onto $ X\rtimes_{\widetilde{\theta}}  G$ with inverse $\Lambda^{-1}$, defined by $\Lambda^{-1}(x,g)=(x,g,\tilde{\theta}_{g}(x))$ for $(x,g)\in X\rtimes_{\widetilde{\theta}}  G$.

		Given $(x,g,y)\in \mathcal{G}(X,P,\theta)$, we assume that $g=ab^{-1}$ and $\theta_{a}(x)=\theta_{b}(y)$ for $a,b\in P$. For an arbitrary open subset $U\subseteq X$ with $x\in U$, the set  $\Sigma(U,a,b,\theta_{b}^{-1}(\theta_{a}(U)))$ is an open neighbourhood of  $(x,g,y)$ in $ \mathcal{G}(X,P,\theta) $ and $\Lambda(\Sigma(U,a,b,\theta_{b}^{-1}(\theta_{a}(U)))=U\times\{g\}$. Thus $\Lambda$ is continuous at $(x,g,y)$. By a  similar way, we show that $\Lambda^{-1}$ is continuous, then $\Lambda$ is a homeomorphism.
\end{proof} 	
	
	For such a semigroup action $(X,P,\theta)$, one can see that the   orbit $ [x]_{\theta} $ =
	$    \{\theta_{n}^{-1}(\theta_{m}(x)) | \;m,n \in P\}$ for $x\in X$. Thus we have the following lemma.

	\begin{lemma}
		Two semigroup actions  by homeomorphisms, $(X,P,\theta)$ and  $(Y,S,\rho)$, are  continuously  orbit equivalent   if  and only if there exist a homeomorphism $\varphi:\, X\rightarrow Y$,  continuous mappings
		$a_{1},b_{1}:\; P\times P\times X \rightarrow S$    and  $a_{2},b_{2}:\; S\times S\times Y \rightarrow P $ such that  $$\rho_{a_{1}(m,n,x)}(\varphi(x))=\rho_{b_{1}(m,n,x)}(\varphi(\theta_{n}^{-1}(\theta_{m}(x)))) \;\; \text{for} \; x\in X, m,n\in P, \eqno{(4.1)}$$
		$$ \theta_{a_{2}(s,t,y)}(\varphi^{-1}(y))= \theta_{b_{2}(s,t,y)}(\varphi^{-1}(\rho_{t}^{-1}(\rho_{s}(y))))\;\; \text{for} \; y\in Y, s,t \in S. \eqno{(4.2)}$$
		
	\end{lemma}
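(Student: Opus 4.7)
The plan is to exploit the fact that when each $\theta_m$ is a homeomorphism, the set $X_{(m,n)}=\{(x,y)\in X\times X:\,\theta_m(x)=\theta_n(y)\}$ is exactly the graph of the homeomorphism $\theta_n^{-1}\theta_m:X\to X$. Thus the map $\iota_{m,n}:x\mapsto (x,\theta_n^{-1}(\theta_m(x)))$ is a homeomorphism from $X$ onto $X_{(m,n)}$, and similarly $Y_{(s,t)}$ is the graph of $\rho_t^{-1}\rho_s$. This identification will let me pass between ``functions of $(m,n,x,y)$ with $(x,y)\in X_{(m,n)}$'' and ``functions of $(m,n,x)$'' in a canonical way, with continuity preserved in both directions since $P\times P$ carries the discrete topology.

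For the forward direction, assume $(X,P,\theta)\sim_{coe}(Y,S,\rho)$ with $\varphi,a_1,b_1,a_2,b_2$ as in Definition 3.1. I would define
$$\widetilde{a}_1(m,n,x):=a_1(m,n,x,\theta_n^{-1}(\theta_m(x))),\quad \widetilde{b}_1(m,n,x):=b_1(m,n,x,\theta_n^{-1}(\theta_m(x))),$$
and analogously $\widetilde{a}_2,\widetilde{b}_2$ on $S\times S\times Y$. Continuity of these is immediate: for fixed $(m,n)$, the map $\iota_{m,n}$ is continuous from $X$ into $X_{(m,n)}\subseteq X\times X$, so $\widetilde{a}_1$ and $\widetilde{b}_1$ are continuous in $x$, and joint continuity on $P\times P\times X$ follows because $P$ is discrete. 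Substituting $y=\theta_n^{-1}(\theta_m(x))$ into (3.1) yields (4.1) at once, and symmetrically for (4.2).

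For the converse, suppose continuous maps $\widetilde{a}_1,\widetilde{b}_1:P\times P\times X\to S$ and $\widetilde{a}_2,\widetilde{b}_2:S\times S\times Y\to P$ satisfy (4.1) and (4.2). Define
$$a_1(m,n,x,y):=\widetilde{a}_1(m,n,x),\quad b_1(m,n,x,y):=\widetilde{b}_1(m,n,x)\quad\text{for } (x,y)\in X_{(m,n)},$$
and similarly for $a_2,b_2$. These are well defined since on $X_{(m,n)}$ the second coordinate $y$ is uniquely determined by $x$ as $\theta_n^{-1}(\theta_m(x))$; in particular the formula does not depend on the representation of $(x,y)$. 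Continuity on $X_{P,\theta}$ follows because the map $(m,n,x,y)\mapsto(m,n,x)$ is just the restriction of the continuous projection $P\times P\times X\times X\to P\times P\times X$. Finally, for $(x,y)\in X_{(m,n)}$ we have $y=\theta_n^{-1}(\theta_m(x))$, so (3.1) is exactly (4.1); the argument for (3.2) is symmetric.

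There is no serious obstacle here: the lemma is essentially a reformulation that replaces the domain $X_{P,\theta}$ (a subspace of $P\times P\times X\times X$) by the homeomorphic model $P\times P\times X$ provided by the graph structure $\iota_{m,n}$. The only point requiring mild care is checking that continuity is preserved under this identification, which is automatic because $\iota_{m,n}$ and its inverse (the first-coordinate projection restricted to $X_{(m,n)}$) are continuous, together with the discreteness of $P$ and $S$.
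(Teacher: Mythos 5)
Your proof is correct and takes essentially the same approach as the paper: the paper states this lemma as an immediate consequence of the observation that, for actions by homeomorphisms, $X_{(m,n)}$ is the graph of $\theta_n^{-1}\theta_m$ (so the coordinate $y$ is determined by $x$), which is precisely the identification you make explicit via $\iota_{m,n}$. The transfer of continuity through this graph homeomorphism, using discreteness of $P$ and $S$, is the only point needing care, and you handle it correctly in both directions.
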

	
	Let $H$ be a countable group and $S$ be a subsemigroup of $H$ such that $S\cap S^{-1}=\{e\}$. One may define a left-invariant order $\leq$ on $H$ by saying that $x\leq y$ $ \Leftrightarrow  x^{-1}y\in S$. A pair $(H,S)$ is called a \emph{lattice-ordered group} if, for every $x$ and $y$ in $H$, the set $\{x,y\}$ admits a least upper bound $x\vee y$ and  a greatest lower bound $x\wedge y$.
	
	For each $g\in H$, we have $(g\wedge e)\leq e$, $(g\wedge e)\leq g$, $g\leq (g\vee e)$ and $e\leq (g\vee e) $. It follows that $(g\wedge e)^{-1}\in S$, $(g\wedge e)^{-1}g\in S$, $g^{-1}(g\vee e)\in S$ and $g\vee e\in S$. Thus $g=(g\wedge e)((g\wedge e)^{-1} g)\in S^{-1}S$ and $g=(g\vee e)((g\vee e)^{-1}g)\in SS^{-1}$. Thus, if $(H,S)$ is a lattice-ordered group, then $H=S^{-1}S=SS^{-1}$ (\cite[Proposition 8.1]{ER}).
	
	\begin{proposition}
		Let $(G,P)$ and $(H,S)$ be two lattice-ordered groups. For two semigroup actions by homeomorphisms $(X,P,\theta)$ and $(Y,S,\rho)$,  if two associated \'{e}tale groupoids  $\mathcal{G}(X,P,\theta)$ and   $\mathcal{G}(Y,S,\rho)$ are isomorphic, then $(X,P,\theta)\sim_{coe}(Y,S,\rho)$.
	\end{proposition}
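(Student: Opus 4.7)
The plan is to extract from the given groupoid isomorphism $\Lambda:\mathcal{G}(X,P,\theta)\to\mathcal{G}(Y,S,\rho)$ the data $(\varphi,a_{1},b_{1},a_{2},b_{2})$ required by Definition~3.1, using the lattice-ordered structure of $(H,S)$ and $(G,P)$ to split each cocycle value into a ratio of positive elements continuously.

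First, just as in the proof of Proposition~3.10, I would set $\varphi:=\Lambda|_{X}$ and define the continuous cocycles $a:=c_{\rho}\circ\Lambda:\mathcal{G}(X,P,\theta)\to H$ and $b:=c_{\theta}\circ\Lambda^{-1}:\mathcal{G}(Y,S,\rho)\to G$, so that $\Lambda(x,g,y)=(\varphi(x),a(x,g,y),\varphi(y))$. Because each $\rho_{s}$ is a homeomorphism, the discussion immediately preceding Lemma~4.1 yields the following decomposition-independence: whenever $h\in H$ admits a decomposition $h=st^{-1}$ with $s,t\in S$ satisfying $\rho_{s}(\varphi(x))=\rho_{t}(\varphi(y))$, the same equation holds for every other such decomposition of $h$.

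Next, I invoke the lattice structure on $(H,S)$: for $h\in H$, set $\hat{s}(h):=h\vee e$ and $\hat{t}(h):=h^{-1}(h\vee e)$, both of which lie in $S$, and a direct calculation gives $h=\hat{s}(h)\,\hat{t}(h)^{-1}$. Analogously, define $\hat{s}',\hat{t}':G\to P$. For $(m,n)\in P\times P$ and $(x,y)\in X_{(m,n)}$, put
\[
a_{1}(m,n,x,y):=\hat{s}(a(x,mn^{-1},y)),\qquad b_{1}(m,n,x,y):=\hat{t}(a(x,mn^{-1},y)),
\]
and symmetrically define $a_{2},b_{2}$ from $b$ and $(\hat{s}',\hat{t}')$. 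Since $\Lambda(x,mn^{-1},y)\in\mathcal{G}(Y,S,\rho)$, \emph{some} $S\cdot S^{-1}$-decomposition of $a(x,mn^{-1},y)$ witnesses (3.1), and by decomposition-independence the lattice choice does as well; equation (3.2) is handled identically.

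The main technical point is continuity of $a_{1},b_{1}$ in $(m,n,x,y)$. Since $P$ and hence $P\times P$ are discrete, it suffices to fix $(m,n)$ and argue continuity of $(x,y)\mapsto a(x,mn^{-1},y)$ on $X_{(m,n)}$; since $a$ is continuous on $\mathcal{G}(X,P,\theta)$ while $\hat{s}$ and $\hat{t}$ are arbitrary functions on the discrete set $H$, this reduces to continuity of the embedding $\iota_{(m,n)}:X_{(m,n)}\to\mathcal{G}(X,P,\theta)$, $(x,y)\mapsto(x,mn^{-1},y)$. Here the homeomorphism hypothesis is essential: given any basic open set $\Sigma(U,m',n',V)$ containing $(x_{0},mn^{-1},y_{0})$ one has $m'(n')^{-1}=mn^{-1}$, and the homeomorphism property forces the equivalence $\theta_{m}(x')=\theta_{n}(y')\Longleftrightarrow\theta_{m'}(x')=\theta_{n'}(y')$ throughout $X\times X$, so every $(x',y')\in X_{(m,n)}\cap(U\times V)$ lies automatically in $\Sigma(U,m',n',V)$. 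This interplay between the multiple presentations $g=pq^{-1}$ and the groupoid topology is the chief obstacle; the lattice-ordered hypothesis then serves simply to furnish a canonical (trivially continuous) section of the multiplication map $S\times S\to H$, $(s,t)\mapsto st^{-1}$, upgrading the existential witness guaranteed by $\Lambda$ to an actual pair of continuous maps.
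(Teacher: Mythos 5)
Your proposal is correct and takes essentially the same route as the paper's own proof: both extract $\varphi=\Lambda|_X$ and the continuous cocycles $a=c_{\rho}\circ\Lambda$, $b=c_{\theta}\circ\Lambda^{-1}$, split each cocycle value by the same lattice decomposition $h=(h\vee e)\bigl(h^{-1}(h\vee e)\bigr)^{-1}$, and use the homeomorphism hypothesis (decomposition-independence from the start of Section 4, plus discreteness of $H$ and $G$) to verify (3.1)--(3.2) and continuity. The only cosmetic difference is that the paper routes the definition through Lemma 4.3, defining $a_1,b_1$ on $P\times P\times X$ and checking continuity by a sequential argument with the bisections $\Sigma(U,m,n,\theta_n^{-1}(\theta_m(U)))$, whereas you define the maps directly on $X_{P,\theta}$ and prove continuity of the embedding $(x,y)\mapsto(x,mn^{-1},y)$ -- the same structural fact, which you actually spell out slightly more explicitly than the paper does.
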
	
	\begin{proof}
		Assume that $\Lambda :\mathcal{G}(X,P,\theta) \rightarrow\mathcal{G}(Y,S,\rho)$ is an  isomorphism. Let $\varphi$ be the restriction of $\Lambda$ to $X$, and let $a(x,g,y)=c_{\rho}\Lambda(x,g,y)$, $b(u,h,v)=c_{\theta}\Lambda^{-1}(u,h,v)$, where $c_{\theta}$ and $c_{\rho}$ are the canonical cocycles on $\mathcal{G}(X,P,\theta)$ and $\mathcal{G}(Y,S,\rho)$. Then $\varphi:\, X\rightarrow Y$ is a homeomorphism, $\Lambda(x,g,y)=(\varphi(x),a(x,g,y),$ $\varphi(y))$ and $\Lambda^{-1}(u,h,v)=(\varphi^{-1}(u),b(u,h,v),$ $\varphi^{-1}(v))$.

Remark that for $x\in X, m,n\in P$, we have $\gamma=(x,mn^{-1},\theta_{n}^{-1}(\theta_{m}(x)))\in \mathcal{G}(X,P,\theta) $ and $\Lambda(\gamma)=(\varphi(x),a(\gamma),\varphi( \theta_{n}^{-1}(\theta_{m}(x)) ))\in \mathcal{G} (Y,S,\rho)$. Define two maps  $a_{1},b_{1}:  P\times P\times X$ $ \rightarrow S$  by
		$$a_{1}(m,n,x)=a(x,mn^{-1},\theta_n^{-1}(\theta_m(x)))\vee e$$ and
		$$b_{1}(m,n,x)= a(x,mn^{-1},\theta_n^{-1}(\theta_m(x)))^{-1} a_{1}(m,n,x)$$
		for $m,n\in P$ and $x\in X$. From the remark before this proposition, $a_1$ and $b_1$ are well-defined and $a(x,mn^{-1},\theta_n^{-1}(\theta_m(x)))=a_{1}(m,n,x)b_{1}(m,n,x)^{-1}$. It follows from the map $\Lambda$ that  $\rho_{a_{1}(m,n,x)}(\varphi(x))=  \rho_{b_{1}(m,n,x)}(\varphi(\theta_{n}^{-1}(\theta_{m}(x)))) $ for $x\in X$, $m,n\in P$.
		
		To see that     $a_{1},b_{1}$  are continuous,	suppose $(m_{i},n_{i},x_{i})\rightarrow (m,n,x) \in P\times P\times X$. Then $m_{i}=m, n_{i}=n$ for large $i$, so  we can assume that $m_{i}=m, n_{i}=n$ for all $i$. Denote by $y_{i}=\theta_{n}^{-1}(\theta_{m}(x_{i}))$ for each $i$ and $y=\theta_{n}^{-1}(\theta_{m}(x))$. Then $y_i\rightarrow y$. For an open subset $U\subseteq X$ with $x\in U$,  let $V= \theta_{n}^{-1}(\theta_{m}(U))$. Then $A=\Sigma(U,m,n,V)$ is an open bisection 	containing $ (x,mn^{-1},y)$, and      $(x_{i},mn^{-1},y_{i})\in A $ for large enough $i$, which implies   $(x_{i},mn^{-1},y_{i})\rightarrow (x,mn^{-1},y)$ in $ \mathcal{G}(X,P,\theta) $. Since  $a$ is continuous, we can assume that $a(x,mn^{-1},y)=a(\gamma)$ for each $ \gamma\in A $. Then $a_{1}(m_{i},n_{i},x_{i})=a_{1}(m,n,x) $ and $b_{1}(m_{i},n_{i},x_{i})=b_{1}(m,n,x) $ for larger $i$. Thus $a_{1} ,b_{1}$  are continuous.    Similarly,   we can construct  continuous maps $a_{2},b_{2}:\; S\times S\times Y \rightarrow P $ satisfying (4.2). It follows from Lemma 4.3 that $(X,P,\theta)\sim_{coe}(Y,S,\rho)$.
	\end{proof}

	Recall that a group action $ (X,G,\alpha)$ is  said to be \emph{topologically free} if for every $e\neq g\in G$, $\{x\in X:\, \alpha_g(x)\neq x\}$ is dense in $X$.
	By  definitions, one can  easily check that a semigroup   action by homeomorphisms,  $(X,P,\theta)$, is essentially free  if and only if the associated group action $(X,G,\tilde{\theta})$ is topologically free.
	By Theorem 3.9, Proposition 3.10, Proposition 4.2, Proposition 4.4 and \cite[Theorem 1.2]{Li1}, we have the following result.
	
	\begin{theorem}
		Let $(X,P,\theta)$ and  $(Y,S,\rho)$ be two essentially free semigroup actions by homeomorphisms. If
		$(X,P,\theta)\sim_{coe}(Y,S,\rho)$, then $ (X,G,\tilde{\theta})  \sim_{coe}(Y,H,\tilde{\rho})$  in Li's sense $ ( $\cite{Li1}$ ) $. Moreover, if $X$ and $Y$ are totally disconnected or $(G,P)$ and $(H,T)$ are two lattice-ordered groups, then the converse holds.
\end{theorem}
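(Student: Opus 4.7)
The plan is to treat this theorem as a diagram chase that assembles the groupoid-level equivalences already established in Sections~3 and~4, routed through Li's characterisation \cite[Theorem~1.2]{Li1} of continuous orbit equivalence of (topologically free) group actions via their transformation groupoids. The bridge between the semigroup and the group settings is Proposition~4.2, which identifies $\mathcal{G}(X,P,\theta)$ with $X\rtimes_{\tilde\theta}G$ (and analogously for $(Y,S,\rho)$ and $H$), so that every groupoid statement on the semigroup side has a faithful translation to one on the group side.

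For the forward implication, I would argue as follows. Assume $(X,P,\theta)\sim_{coe}(Y,S,\rho)$. Theorem~3.9 yields an \'{e}tale groupoid isomorphism $\mathcal{G}(X,P,\theta)\cong\mathcal{G}(Y,S,\rho)$; applying Proposition~4.2 to each side then gives an \'{e}tale groupoid isomorphism $X\rtimes_{\tilde\theta}G\cong Y\rtimes_{\tilde\rho}H$. By the remark immediately preceding the theorem, essential freeness of the semigroup actions is equivalent to topological freeness of the extended group actions $(X,G,\tilde\theta)$ and $(Y,H,\tilde\rho)$, so Li's theorem applies and delivers $(X,G,\tilde\theta)\sim_{coe}(Y,H,\tilde\rho)$ in the sense of \cite{Li1}.

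For the converse, I would reverse the chain. Suppose $(X,G,\tilde\theta)\sim_{coe}(Y,H,\tilde\rho)$ in Li's sense. Topological freeness together with Li's theorem produces an isomorphism $X\rtimes_{\tilde\theta}G\cong Y\rtimes_{\tilde\rho}H$ of \'{e}tale groupoids, and Proposition~4.2 transports this to an isomorphism $\mathcal{G}(X,P,\theta)\cong\mathcal{G}(Y,S,\rho)$. At this point the two alternative hypotheses enter: if $X$ and $Y$ are totally disconnected, Proposition~3.10 upgrades the groupoid isomorphism to continuous orbit equivalence of the semigroup actions; if $(G,P)$ and $(H,S)$ are lattice-ordered, Proposition~4.4 performs the same upgrade. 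In either case we obtain $(X,P,\theta)\sim_{coe}(Y,S,\rho)$.

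No substantial obstacle remains, as the heavy lifting has already been distributed across Theorem~3.9, Proposition~4.2, Proposition~3.10, Proposition~4.4 and Li's theorem. The only point requiring any care is matching the freeness hypotheses across the two settings so that Li's theorem is legitimately applicable, which is precisely the content of the observation recorded immediately before the theorem. The proof is thus compact, essentially the four-step chase Theorem~3.9 $\to$ Proposition~4.2 $\to$ Li's theorem and its converse $\to$ Proposition~4.2 $\to$ Proposition~3.10 or Proposition~4.4, with no new computation required.
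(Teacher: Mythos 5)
Your proof is correct and is exactly the paper's own argument: the paper proves this theorem precisely by citing the chain Theorem~3.9, Proposition~4.2, \cite[Theorem~1.2]{Li1} (for the forward direction), and Li's theorem, Proposition~4.2, Proposition~3.10 or Proposition~4.4 (for the converse), with the freeness hypotheses matched via the observation that essential freeness of $(X,P,\theta)$ is equivalent to topological freeness of $(X,G,\tilde\theta)$. Your write-up simply makes this diagram chase explicit.
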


	\subsection*{Acknowledgements}
	This work is supported by the NSF of China (Grant No. 11771379, 11971419, 11271224).


\begin{thebibliography}{}
\bibitem{BT}  M. Boyle  and  J. Tomiyama, \emph{ Bounded topological orbit equivalence and  $C^*$-algebras}, J. Math. Soc. Japan, 50 (1998), 317-329. https://doi.org/10.2969/jmsj/05020317.
		

\bibitem{BCFS} J. H. Brown, L. Clark, C. Farthing and A. Sims, \emph{Simplicity of algebras associated to \'{e}tale groupoids}, Semigroup Forum, 88 (2014) 433-452. https://doi.org/10.1007/s00233-013-9546-z.


\bibitem{BNRSW} J. H. Brown, G. Nagy, S. Reznikoff, A. Sims and D. P. Williams, \emph{Cartan subalgebras in $C^*$-algebras of \'{e}tale Hausdorff groupoids}, Integral Equations Operator Theory,  85 (2016) 109-126. https://doi.org/10.1007/s00020-016-2285-2.


 \bibitem{BCW} N. Brownlowe, T. M. Carlsen and M. F. Whittaker, \emph{Graph algebras and orbit equivalence}, Ergodic Theory and Dynamical Systems, 37 (2017) 389-417. https://doi.org/10.1017/etds.2015.52.


\bibitem{CEOR} T. M. Carlsen,  S. Eilers,  E. Ortega and G. Restorff, \emph{Flow equivalence and orbit equivalence for shifts of finite type and isomorphism of their groupoids}, J. Math. Anal. Appl., 469  (2019) 1088-1110. https://doi.org/10.1016/j.jmaa.2018.09.056.
 
 
 \bibitem{CRST}  T. M. Carlsen, E. Ruiz, A. Sims and M. Tomforde, \emph{Reconstruction of groupoids and C*-rigidity of dynamical systems}, Advances in Mathematics, 390 (2021) 107923. https://doi.org/10.1016/j.aim.2021.107923.
 
 \bibitem{CB} L. G. Cordeiro and  V. Beuter, \emph{The dynamics of partial inverse semigroup actions}, J. Pure   Appl. Algebra, 224  (2020) 917-957. https://doi.org/10.1016/j.jpaa.2019.06.001.
 
 



 \bibitem{Ex} R. Exel, \emph{Semigroupoid $C^*$-algebras}, J. Math. Anal. Appl., 377 (2011)  303-318.  https://doi.org/10.1016/j.jmaa.2010.10.061.

\bibitem{ER} R. Exel and J. Renault, \emph{Semigroups of local homeomorphisms and interaction groups}, Ergodic Theory and   Dynamical Systems,  27 (2007) 1737-1771. https://doi.org10.1017/S0143385707000193.
 
 
  \bibitem{GPS} T. Giordano, I. F. Putnam and C. F. Skau, \emph{Topological orbit equivalence and $C^*$-crossed products}, Reine Angew. Math., 469 (1995), 51-111. https://doi.org/10.1515/crll.1995.469.51.
		
	
 \bibitem{HQ} C. J. Hou and X. Q. Qiang, \emph{Asymptotic continuous orbit equivalence of expansive systems},   Stud Math.,  259  (2021)  201-224. https://doi.org10.4064/sm200223-25-8.
		
 \bibitem{Li1} X. Li, \emph{Continuous orbit equivalence  rigidity}, Ergodic Theory Dynam. Systems, 38 (2018) 1543-1563. https://doi.org/10.1017/etds.2016.98.
		
\bibitem{Li2} X. Li, \emph{Partial transformation groupoids attached to graphs and semigroups}, International Mathematics Research Notices,  (2017) 5233-5259. https://doi.org/10.1093/imrn/rnw166.
		
		
\bibitem{Lin} H. X. Lin, \emph{Tracial equivalence for $C^*$-algebras and orbit equivalence for minimal dynamical systems}, Proc. Edinb. Math. Soc., 48 (2005) 673-690. https://doi.org/10.1017/S0013091503000889.

  
\bibitem{LM} H. X. Lin and H. Matui, \emph{Minimal dynamical systems and approximate conjugacy}, Math. Ann., 332 (2005) 795-822. https://doi.org/10.1007/s00208-005-0654-2.
		
\bibitem{Ma1} K. Matsumoto, \emph{Orbit equivalence of topological Markov shifts and Cuntz-Krieger algebras},  Pacific J. Math., 246 (2010) 199-225.  https://doi.org10.2140/pjm.2010.246.199.

\bibitem{Ma2} K. Matsumoto, \emph{Asymptotic continuous orbit equivalence of Smale spaces and Ruelle algebras}, Canad. J. Math., 71 (2019)  1243-1296. https://doi.org/10.4153/CJM-2018-012-x.


		\bibitem{Ma3}  K. Matsumoto, \emph{ On one-sided topological conjugacy of topological Markov shifts and gauge actions on Cuntz--Krieger algebras}, Ergodic Theory and Dynamical Systems, (2021) 1-8. https://doi.org/10.1017/etds.2021.53.
	
		\bibitem{MM} K. Matsumoto and H. Matui, \emph{Continuous orbit equivalence of topological Markov shifts and Cuntz-Krieger algebras}, Kyoto J. Math., 54 (2014) 863-877. https://doi.org/10.1215/21562261-2801849.
		\bibitem{QH}  X. Q. Qiang and C. J. Hou, \emph{Continuous orbit equivalence up to equivalence relations}, (2021) arXiv:math.OA/2106.05085.
		
		\bibitem{Re1} J. Renault, \emph{A groupoid approach to $C^*$-algebras},  Lecture Notes in
		Math., 793, Springer, Berlin, 1980. https://doi.org/10.1007/BFb0091072.
		
		
		\bibitem{Re2} J. Renault, \emph{Cartan subalgebras in $C^*$-algebras}, Irish Math. Soc. Bull., 61 (2008) 29-63.

\bibitem{Sim} A. Sims, \emph{Hausdorff \'{e}tale groupoids and their $C^*$-algebras}, in the volume \emph{Operator algebras and dynamics: groupoids, crossed products and Rokhlin dimension} in \emph{Advanced Courses in Mathematics. CRM Barcelona}. Springer Nature Switzerland AG 2020, Birkh\"{a}user.
		
	
		

	\end{thebibliography}
\end{document}